\newcommand{\g}{\mathfrak{g}}
\newcommand{\h}{\mathfrak{h}}
\newcommand{\mv}{\mathfrak{v}}
\newcommand{\mz}{\mathfrak{z}}
\theoremstyle{plain}
\newtheorem{theorem}{Theorem}[section]
\newtheorem{proposition}{Proposition}[section]
\newtheorem{lemma}{Lemma}[section]
\theoremstyle{definition}
\newtheorem{definition}{Definition}[section]
\theoremstyle{remark}
\newtheorem{remark}{Remark}[section]
\newcommand{\R}{\mathbb{R}}
\newcommand{\Z}{\mathbb{Z}}
\newcommand{\N}{\mathbb{N}}
\newcommand{\T}{\mathbb{T}}
\renewcommand{\H}{\mathbb{H}}
\newcommand{\Ker}{\mathrm{Ker}}
\renewcommand{\Im}{\mathrm{Im}}
\newcommand{\End}{\mathrm{End}}
\newcommand{\Aut}{\mathrm{Aut}}
\renewcommand{\O}{\mathrm{O}}
\newcommand{\GL}{\mathrm{GL}}
\newcommand{\Sp}{\mathrm{Sp}}
\newcommand{\dv}{\mathrm{dvol}}
\newcommand{\Inn}{\mathrm{Inn}}
\newcommand{\Stab}{\mathrm{Stab}}
\newcommand{\Ric}{\mathbf{Ric}}
\begin{document}

\title{Mahler type theorem and non-collapsed limits of sub-Riemannian compact Heisenberg manifolds}
\author{Kenshiro Tashiro}
\date{}
\maketitle
\begin{abstract}
            In this paper,
we study a non-collapsed Gromov--Hausdorff limit of a sequence of compact Heisenberg manifolds with sub-Riemannian metrics.
In the case of strictly sub-Riemannian case, we show that if a sequence has an upper bound of the diameter and a lower bound of Popp's measure, then it has a convergent subsequence in the Gromov--Hausdorff topology, and the limit is isometric to a compact Heisenberg manifold of the same dimension.
The same conclusion is also shown for Riemannian case with the additional assumption on Ricci curvature lower bounds.

\end{abstract}

\section{Introduction}

In Riemannian geometry,
the topological/differential structure of the Gromov--Hausdorff limit of a sequence of Riemannian manifolds $\{M_k\}$ is actively studied under the assumption of the sectional curvature or the Ricci curvature.
The easiest case is flat manifolds,
that is Riemannian manifolds with the sectional curvature is $0$.
Denote by $\mathcal{M}_{flat}(n,D,V)$ the set of $n$-dimensional compact flat Riemannian manifolds with a diameter upper bound $D$ and a volume lower bound $V$.
If a sequence in $\mathcal{M}_{flat}(n,D,V)$ consists of flat tori,
then the limit is again isometric to a flat torus of the same dimension.
It is a consequence of the classical Mahler's compactness theorem \cite{mah}.
If a sequence consists of flat manifolds which are not necessarily tori,
then the limit space may be a flat orbifold \cite{bet}.
Such (pre)compactness theorem is generalized to all compact Riemannian manifolds by Gromov \cite{gro}.
He showed that the set of compact Riemannian manifolds $\mathcal{M}(n,D,V,K)$, where $n,D,V$ are same to the above and $K$ is a lower bound of the Ricci curvature,
is precompact in the set of compact metric spaces with the Gromov--Hausdorff topology.
Such a limit space are called a \textit{Ricci limit space},
which may not be smooth Riemannian manifolds.

We will study an analogy of these results for compact quotients of the Heisenberg Lie group with left invariant Riemannian/sub-Riemannian metrics.
The Heisenberg Lie group $\H_n$ is the simply connected nilpotent Lie group diffeomorphic to $\mathbb{C}^n\times\R$ with the law of group product
$$(\mathbf{w}_1,z_1)\cdot(\mathbf{w}_2,z_2)=(\mathbf{w}_1+\mathbf{w}_2,z_1+z_2+\frac{1}{2}\mathfrak{Im}(\mathbf{w}_1\cdot \mathbf{w}_2)),$$
where $\mathfrak{Im}(\mathbf{w}_1\cdot \mathbf{w}_2)$ is the imaginary part of the hermitian product.
Let $\Gamma$ be a lattice in $\H_n$.
We call the quotient space $\Gamma\backslash \H_n$ a \textit{compact Heisenberg manifold}.
A left invariant Riemannian/sub-Riemannian metric on $\H_n$ is given by data $(\mv,\langle\cdot,\cdot\rangle)$,
where $\mv$ is a subspace of the Lie algebra $\h_n$ and $\langle\cdot,\cdot\rangle$ is a scalar product on $\mv$.
These data yield the length structure and the geodesic distance on $\H_n$, see Section \ref{sec0}.
We say that a sub-Riemannian metric on $\Gamma\backslash \H_n$ is \textit{left invariant} if the pullback metric on the universal cover $\H_n$ is left invariant.
We call $k:=\dim(\h_n/\mv)$ the corank of a sub-Riemannian metric $(\mv,\langle\cdot,\cdot\rangle)$.
Notice that corank $0$ sub-Riemannian metric is Riemannian.

One of the main theorem is non-collapsed limits for Riemannian manifolds.
\begin{theorem}\label{mainthm}
	Let $\{(\Gamma_k\backslash \H_n,\langle\cdot,\cdot\rangle_k)\}_{k\in\N}$ be a sequence of compact Heisenberg manifolds with left invariant Riemannian metrics which converges in the Gromov--Hausdorff topology.
 Assume that there are constants $D,V>0$ and $K\in\R$ such that
 \begin{itemize}
     \item[1] $diam(\Gamma_k\backslash \H_n,\langle\cdot,\cdot\rangle_k)\leq D$,
     \item[2] $meas(\Gamma_k\backslash \H_n,\langle\cdot,\cdot\rangle_k)\geq V$
 \item[3]$Ric_{\langle\cdot,\cdot\rangle_k}\geq K$.
 \end{itemize}
 Then the Gromov--Hausdorff limit is isometric to a compact Heisenberg manifold with left invariant Riemannian metric of the same dimension.
 \end{theorem}
On Riemannian manifolds,
one has a canonical isometry invariant volume form $\dv_R$.
The total measure $meas(\Gamma_k\backslash \H_n,\langle\cdot,\cdot\rangle_k)$ is defined by the integral of this volume form.
Its generalization to sub-Riemannian manifolds is called \textit{Popp's volume} $\dv_{sR}$.
For its definition, see Section \ref{secpopp}.
Another main theorem is on non-collapsed limits of sub-Riemannian compact Heisenberg manifolds.

\begin{theorem}\label{mainthm2}
	Let $\{(\Gamma_k\backslash H_n,\mv_k,\langle\cdot,\cdot\rangle_k)\}_{k\in\N}$ be a sequence of compact Heisenberg manifolds with left invariant sub-Riemannian metrics of corank $1$ which converges in the Gromov--Hausdorff topology.
 Assume that there are constants $D,V>0$ such that
 \begin{itemize}
 \item[1]$diam(\Gamma_k\backslash \H_n,\mv_k,\langle\cdot,\cdot\rangle_k)\leq D$,
 \item[2]$meas(\Gamma_k\backslash \H_n,\mv_k,\langle\cdot,\cdot\rangle_k)\geq V$.
 \end{itemize}
 Then the Gromov--Hausdorff limit is isometric to a compact Heisenberg manifold with left invariant sub-Riemannian metric of the same dimension.
\end{theorem}

The proof is done by the following two steps.
Let us start from the Riemannian case.
First of all,
we show the finiteness theorem for diffeomorphism type of compact Heisenberg manifolds.

\begin{proposition}\label{propfiniteriem}
	Under the assumption in Theorem \ref{mainthm},
	the number of diffeomorphism classes of compact Heisenberg manifolds is finite.
\end{proposition}

By this proposition,
we can assume that a sequence consists of diffeomorphic compact Heisenberg manifolds.
For a fixed diffeomorphism type $\Gamma\backslash \H_n$,
we can parametrize the isometry classes of left invariant Riemannian metrics $\mathcal{M}_0(\Gamma\backslash \H_n)$ (see Theorem \ref{dfn2-3}).
From this parametrization,
we can define a canonical Hausdorff topology $\mathcal{O}_0$.
We call the pair $(mathcal{M}_0(\Gamma\backslash \H_n),\mathcal{O}_0)$ the \textit{moduli space of Riemannian metrics}.
In \cite{bol2},
Boldt characterize precompact subsets in th moduli space $(\mathcal{M}_0(\Gamma\backslash \H_n),\mathcal{O}_0)$ under specific four conditions on metric tensors.
\begin{theorem}[Corollary 3.14 in \cite{bol2}]\label{thmbol1}
	A subset in $(\mathcal{M}_0(\Gamma\backslash \H_n),\mathcal{O}_0)$ is precompact if and only if the conditions (A-1)-(A-4) hold.
\end{theorem}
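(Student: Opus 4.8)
The plan is to make the moduli space $(\mathcal{M}_0(\Gamma\backslash H_n),\mathcal{O}_0)$ concrete and then run a Mahler-type reduction argument. A left invariant metric on $\Gamma\backslash H_n$ is the same datum as an inner product on the Heisenberg Lie algebra $\mathfrak{h}_n=\mathfrak{v}\oplus\mathfrak{z}$ (as a vector space), where $\mathfrak{v}$ is a $2n$-dimensional horizontal complement and $\mathfrak{z}$ the $1$-dimensional center; by the well known description of isometries between left invariant metrics on nilpotent Lie groups, two such inner products induce isometric metrics on $\Gamma\backslash H_n$ exactly when one is pulled back to the other by an automorphism of $H_n$ preserving $\Gamma$. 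Fixing a basis, this identifies the full moduli space with the double coset space $O(2n+1)\backslash GL_{2n+1}(\R)/\mathrm{Aut}(\Gamma)$ --- the Heisenberg analogue of $O(n)\backslash GL_n(\R)/GL_n(\Z)$ for flat tori --- and $\mathcal{O}_0$ with the quotient topology inherited from $GL_{2n+1}(\R)$ on the relevant stratum. A subset $S\subset\mathcal{M}_0$ is then precompact precisely when every sequence in $S$ admits lifts $A_k\in GL_{2n+1}(\R)$ which, after modification on the right by suitable $\varphi_k\in\mathrm{Aut}(\Gamma)$, stay in a compact subset of $GL_{2n+1}(\R)$ and subconverge.

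For the necessity of (A-1)--(A-4) I would check that each condition is invariant under the right $\mathrm{Aut}(\Gamma)$-action and the left $O(2n+1)$-action, hence descends to $\mathcal{M}_0$, and that each is either a continuous function bounded on compact sets or an explicit ``escape detector'' --- a vanishing systole of $\Gamma$ in the given metric, a degenerating or unbounded horizontal covolume, an unbounded ratio between the central and horizontal scales, and so on. Since a precompact $S$ sits inside a compact subset of $\mathcal{M}_0$, on which such quantities attain finite and strictly positive extrema, all four conditions hold on $S$.

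Sufficiency is the substantive half. Given a sequence of inner products whose classes satisfy (A-1)--(A-4), I would use the graded structure of $\mathrm{Aut}(\Gamma)$: it is an extension whose quotient, the induced action on $\mathfrak{v}$, lands in a fixed arithmetic subgroup of the group of symplectic similitudes of the form that $\Gamma$ induces on $\mathfrak{v}$, whose kernel is a unipotent ``translational'' part, and whose induced action on $\mathfrak{z}$ is by the similitude factor of the horizontal action. For each $k$ one picks $\varphi_k$ so that the restriction of the $k$-th inner product to the horizontal lattice becomes Minkowski-reduced and the central direction is normalized within the resulting frame. Conditions (A-1)--(A-4) should then translate, in the reduced picture, into: the horizontal Gram matrix has all eigenvalues pinched in a fixed compact interval in $(0,\infty)$ (a Mahler/Hermite-type estimate, cf.\ \cite{mah}), the central length is likewise pinched, and the off-block ``twist'' coupling $\mathfrak{v}$ to $\mathfrak{z}$ stays bounded. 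These three facts confine the reduced lifts to a compact set, a subsequence converges, and the limit projects to the desired accumulation point of the original sequence in $(\mathcal{M}_0,\mathcal{O}_0)$; the converse passage --- convergence of classes from convergence of lifts --- is a routine continuity check using openness of the projection $GL_{2n+1}(\R)\to(\mathcal{M}_0,\mathcal{O}_0)$.

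The step I expect to be the main obstacle is precisely this reduction. In the flat-torus case Mahler's compactness theorem for $GL_n(\Z)$ (bounded covolume plus bounded systole) does the whole job; here $\mathrm{Aut}(\Gamma)$ is a \emph{proper} arithmetic subgroup that simultaneously carries an $\mathrm{Sp}$-type linear action on the horizontal part, a quadratically graded scaling on the center, and a unipotent translational part, so one must establish a Mahler-type statement adapted to this specific group. The delicate escape mode to rule out is the one in which the horizontal block stays reduced while the center --- or the twist coupling it to $\mathfrak{v}$ --- slides to $0$ or $\infty$ relative to it, and it is exactly to forbid this that one needs the conditions among (A-1)--(A-4) that go beyond the naive systole-and-covolume bounds. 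A secondary point requiring care is that $\mathcal{O}_0$ is genuinely the quotient topology on the chosen stratum and that the non-free locus of the $\mathrm{Aut}(\Gamma)$-action, where the isometry group jumps, causes no trouble; this is best handled by working throughout with the lifts $A_k$ in $GL_{2n+1}(\R)$ and only passing to the quotient at the very end.
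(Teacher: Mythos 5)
This statement is quoted verbatim from Boldt (Corollary~3.14 of \cite{bol2}); the paper supplies no proof of it, so there is nothing internal to compare your argument against except the surrounding structural remarks. Judged on its own terms, your setup and the necessity direction are fine in outline: the identification of the moduli space with a double coset space is Theorem~\ref{dfn2-3} (note, though, that after Lemma~\ref{lem3-1} the relevant space is $\Pi_{\bm{r}}\backslash\left(GL_{2n}(\R)\times\R\right)/\left(O(2n)\times\{\pm1\}\right)$, not a full $GL_{2n+1}(\R)$ double coset --- the unipotent ``translational'' block is already killed by an inner automorphism, so there is no residual off-block twist to track), and (A-1)--(A-4) are indeed invariants of the class $[A]$ (Lemmas~\ref{lem2-2} and~\ref{lemdn}) that are continuous and positive, hence bounded away from the bad values on compact sets.

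The sufficiency direction, however, has a genuine gap, and it is exactly the one Remark~\ref{rmk2-1} warns about. Your plan is to Minkowski-reduce the horizontal Gram matrix using $\mathrm{Aut}(\Gamma)$ and then read off eigenvalue pinching from (A-1)--(A-4). But the image of $Stab(\Gamma_{\bm{r}})_\ast$ in the horizontal block is $G_{\bm{r}}\cap\widetilde{Sp}(2n,\R)$, which for $n\geq2$ has \emph{infinite index} in the arithmetic group $G_{\bm{r}}\simeq GL_{2n}(\Z)$ for which Minkowski/Mahler reduction is classically available; a reduced representative in your sense need not exist in the orbit, and bounded systole plus bounded covolume do \emph{not} prevent escape to infinity modulo the symplectic subgroup. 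You flag ``a Mahler-type statement adapted to this specific group'' as the main obstacle and then do not supply it --- but that statement \emph{is} the theorem. Moreover, the escape mode you single out as delicate (the center or the twist sliding to $0$ or $\infty$ relative to the horizontal part) is the easy one: (A-4) bounds $|\rho_A|$ from both sides directly, and the twist is gone after Lemma~\ref{lem3-1}. The genuinely hard escape is purely horizontal, and the ingredient that controls it --- the bound $d_n(A)\leq C_3$ on the top eigenvalue of the $j$-operator $\hspace{1pt}^{t}\hspace{-2pt}\tilde{A}J_n\tilde{A}$, i.e.\ condition (A-3), which together with (A-2) and Lemma~\ref{lemfundamental} pinches \emph{all} the $d_i(A)$ and thereby substitutes for the missing reduction theory --- never enters your argument. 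Without identifying this mechanism the sufficiency half of the proof is not there.
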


We will describe the explicit condition (A-1)-(A-4) in Section \ref{secriem}.
The main contribution of this paper is the geometric interpretation of this assumptions.

\begin{proposition}\label{propriem}
	The conditions (A-1)-(A-4) are equivalent to the assumption of Theorem \ref{mainthm}.
\end{proposition}

We can prove the main theorem \ref{mainthm} provided Proposition \ref{propfiniteriem}, \ref{propriem} and Theorem \ref{thmbol1} hold.
\begin{proof}[Proof of Theorem \ref{mainthm}]
    By the assumption and Proposition \ref{propfiniteriem},
    we can assume $\Gamma_k\simeq \Gamma$ for all $k$.
    By Proposition \ref{propriem} and Theorem \ref{thmbol1},
    there is a subsequence which converges in $(\mathcal{M}_0(\Gamma\backslash \H_n),\mathcal{O}_0)$.
    Since both Gromov--Hausdorff topology and the canonical topology $\mathcal{O}_0$ is Hausdorff,
    the limit in $\mathcal{O}_0$ is unique and it coincides with the Gromov--Hausdorff limit.
\end{proof}

We can generalize the previous arguments to the $(\text{corank }1)$ sub-Riemannian setting.
In the same way to Riemannian case,
we can define the \textit{moduli space of sub-Riemannian metrics of corank $1$},
$(\mathcal{M}_1(\Gamma\backslash \H_n),\mathcal{O}_1)$.

\begin{proposition}\label{propfinitesR}
	Under the assumption in Theorem \ref{mainthm2},
	the number of diffeomorphism classes of compact Heisenberg manifolds is finite.
\end{proposition}

\begin{theorem}[Essentially due to Corollary 3.14 in \cite{bol2}]\label{thmbol2}
	A subset in $(\mathcal{M}_1(\Gamma\backslash H_n),\mathcal{O}_1)$ is precompact  if and only if the conditions (A-1)-(A-3) hold.
\end{theorem}

\begin{proposition}\label{propsR}
	The conditions (A-1)-(A-3) are equivalent to the assumption of Theorem \ref{mainthm2}.
\end{proposition}

These propositions show Theorem \ref{mainthm2}.
Construction of the moduli space and the proof of Proposition \ref{propriem} and \ref{propsR} are the core of this paper.

\section*{Acknowledgement}
The author would like to thank to Koji Fujiwara for many helpful comments.
This work was supported by JSPS KAKENHI Grant Number JP20J13261.

\section{Preliminaries from sub-Riemannian Lie group}\label{sec0}
In this section we prepare notation on sub-Riemannian metrics especially on Lie groups.
\subsection{Sub-Riemannian structure}

	Let $G$ be a connected Lie group,
	$\g$ the associated Lie algebra,
	$\mv\subset \g$ a subspace and $\langle\cdot,\cdot\rangle$ a scalar product on $\mv$.
	For $x\in G$,
	denote by $L_x:G\to G$ (resp. $R_x:G\to G$) the left translation (resp. right translation) by $x$.
	Define a sub-Riemannian metric on $G$ by
	$$\mathcal{D}_x=L_{x\ast}\mv,~~~~g_x(u,v)=\langle L_{x\ast}^{-1}u,L_{x\ast}^{-1}v\rangle.$$
	Such a sub-Riemannian metric $(\mathcal{D},g)$ is called \textit{left-invariant}.
	We sometimes write left-invariant sub-Riemannian metric by $(\mv,\langle\cdot,\cdot\rangle)$.
 Moreover, if $\dim(\g/\mv)=k$,
 we say that a sub-Riemannian metric $(\mv,\langle\cdot,\cdot\rangle)$ is corank $k$.
 Notice that if $\mv=\g$ i.e. corank $0$,
 then $(\g,\langle\cdot,\cdot\rangle)$ is a Riemannian metric.
\begin{remark}
    From now on we shall declare the corank of sub-Riemannian metrics.
    If we do not declare the corank,
    then the word "sub-Riemannian metric" cover sub-Riemannian metrics of all corank.
\end{remark}

For simplicity,
we shall consider a Lie group with a left-invariant sub-Riemannian metric $(G,\mv,\langle\cdot,\cdot\rangle)$.
The associated distance function is given as follows.
We say that an absolutely continuous path $c:[0,1]\to G$ is \textit{admissible} if $\dot{c}(t)\in L_{c(t)\ast}\mv$ a.e. $t\in [0,1]$.
We define the length of an admissible path by
$$length(c)=\int^1_0\sqrt{\langle\dot{c}(t),\dot{c}(t)\rangle}dt.$$
For $x,y\in G$,
define the distance function by
$$dist(x,y)=\inf\left\{length(c)~|~c(0)=x,c(1)=y,\text{$c$ is admissible}\right\}.$$
In general not every pair of points in $G$ is joined by an admissible path.
This implies that the value of the function $dist$ may be the infinity.
The following \textit{bracket generating} condition ensures that any two points are joined by an admissible path.
\begin{definition}[Bracket generating distribution]\label{dfnbracket}
	For a sub-Riemannian Lie group $(G,\mv,\langle\cdot,\cdot\rangle)$ and an integer $i\in\N$,
	let $\mv^i$ be the subspace in $\g$ inductively defined by
	$$\mv^1=\mv,~~~~\mv^{i+1}=\mv+[\mv,\mv^i].$$
	We say that a subspace $\mv$ is bracket generating if there is $r\in\N$ such that $\mv^r=\g$.
We call $(G,\mv,\langle\cdot,\cdot\rangle)$ is $r$-step if $\mv^{r-1}\subsetneq\mv^r=\g$. 
\end{definition}

\begin{theorem}[See e.g. Theorem 3.31 in \cite{agr}]
	Let $(G,\mv,\langle\cdot,\cdot\rangle)$ be a sub-Riemannian Lie group with a bracket generating distribution.
	Then the following two assertions hold.
	\begin{itemize}
		\item[1]$(G,dist)$ is a metric space,
		\item[2]the topology induced by $dist$ is equivalent to the manifold topology.
		\end{itemize}
		In particular,
		$dist:G\times G\to \R$ is continuous.
\end{theorem}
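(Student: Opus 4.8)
The plan is to deduce both assertions from one \emph{local} statement: for every $x \in M$ there are a coordinate chart $\varphi = (y_1,\dots,y_N)\colon U \to \R^N$ with $x \in U$, an exponent $\alpha \in (0,1]$, and constants $0 < c \le C$, such that
\[
  c\,|\varphi(x) - \varphi(y)| \;\le\; d(x,y) \;\le\; C\,|\varphi(x) - \varphi(y)|^{\alpha}
  \qquad \text{for all } y \in U .
\]
The right inequality forces $d(x,y) < \infty$ on $U$. Granting this, assertion (1) follows: the relation ``$d(x,y) < \infty$'' is an equivalence relation whose classes are open (apply the local bound at each point of a class), so connectedness of $M$ makes $M$ a single class, i.e. $d$ is everywhere finite; symmetry holds because time reversal of an admissible curve is admissible with the same length, the triangle inequality comes from concatenating curves, and $d(x,y) > 0$ for $x \ne y$ is the left inequality. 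Assertion (2) is then immediate: the two inequalities sandwich $d$-balls and coordinate balls around each point, so the $d$-topology and the manifold topology have the same neighbourhood filters; continuity of $d$ on $M \times M$ follows from the triangle inequality together with the local estimate.

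Hence everything reduces to the local estimate, whose hard half is the upper bound $d(x,y) \le C\,|\varphi(x)-\varphi(y)|^{\alpha}$ --- this is the substance of Chow--Rashevskii. I would argue as follows. Fix a local frame $X_1,\dots,X_m$ of horizontal vector fields whose values span $\mathcal{D}$ near $x$, with $\sqrt{g(\dot c,\dot c)}$ bounded along the corresponding flow curves on $U$. By bracket generation there are $r = r(x)$ and iterated brackets $Y_1, \dots, Y_N$ ($N = \dim M$), each of the form $[X_{i_1},[\,\cdots,[X_{i_{k-1}},X_{i_k}]\cdots]]$ with $k \le r$, such that $Y_1(x), \dots, Y_N(x)$ span $T_x M$. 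The key device is that a bracket flow is approximated by a commutator of flows of the generators: with $\Phi^Z_t$ the time-$t$ flow of $Z$,
\[
  \Phi^{Y}_{-s} \circ \Phi^{X}_{-s} \circ \Phi^{Y}_{s} \circ \Phi^{X}_{s}
  \;=\; \Phi^{[X,Y]}_{s^2} + O(s^3),
\]
so flowing along $[X,Y]$ for time $t$ costs admissible length $\approx 4\sqrt{t}$, and, iterating for $k$-fold brackets, flowing along $Y_j$ for time $t_j$ costs admissible length $\lesssim |t_j|^{\alpha_j}$ for some $\alpha_j \in (0,1]$. Concatenating such admissible arcs in the directions $Y_1, \dots, Y_N$ defines a smooth map $\Psi(t_1,\dots,t_N) \in M$ with $\Psi(0) = x$ and total admissible length $\lesssim \sum_j |t_j|^{\alpha}$, $\alpha = \min_j \alpha_j$. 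A direct computation shows that, after an anisotropic rescaling of the variables $t_j$ adapted to the bracket lengths, the differential of $\Psi$ at the origin has rank $N$; the corresponding weighted inverse function theorem then shows that $\Psi$ maps a neighbourhood of $0 \in \R^N$ onto a neighbourhood $U$ of $x$, and pushing the length bound through $\Psi^{-1}$ yields the H\"older estimate. The lower bound $d(x,y) \ge c\,|\varphi(x)-\varphi(y)|$ is routine: on a relatively compact coordinate ball the metric $g$ on $\mathcal{D}$ dominates a fixed multiple of the ambient Euclidean metric restricted to $\mathcal{D}$, so every admissible curve has coordinate speed $\le C'\sqrt{g(\dot c, \dot c)}$ and hence coordinate displacement $\le C'\,\ell(c)$.

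The step I expect to be the main obstacle is ``full rank of the suitably weighted differential of $\Psi$'' together with the uniform control of the admissible length of the concatenated curve as $y \to x$: a naive $C^1$ computation gives $\partial_{t_j}\Psi(0) = 0$ for every higher bracket, so one genuinely needs privileged (weighted) coordinates and a weighted inverse function theorem --- or, failing that, a Sard--degree argument to see that the image of $\Psi$ has nonempty interior. For assertion (1) alone one could instead invoke the orbit theorem of Sussmann and Nagano: the family $\{\pm X_1, \dots, \pm X_m\}$ is symmetric and the Lie algebra it generates evaluates onto all of $T_x M$ by bracket generation, so every orbit is open, whence $M$ is a single orbit and $d < \infty$ everywhere; but the quantitative local estimate needed for assertion (2) and for the continuity of $d$ still has to be proved via the flow-commutator construction above.
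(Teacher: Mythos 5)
The paper offers no proof of this statement: it is quoted verbatim as Theorem 3.31 of \cite{agr}, so the only comparison available is with the proof in that reference. Your route is the classical one (Chow's original commutator argument, essentially a weak form of the ball--box theorem): reduce everything to the two-sided local estimate $c\,|\varphi(x)-\varphi(y)|\le d(x,y)\le C\,|\varphi(x)-\varphi(y)|^{\alpha}$ and prove the upper bound by concatenating commutator arcs $\Phi^{Y}_{-s}\circ\Phi^{X}_{-s}\circ\Phi^{Y}_{s}\circ\Phi^{X}_{s}$. This is genuinely different from the proof in \cite{agr}, which never estimates bracket flows: there one shows reachability by an inductive construction, picking compositions of flows of the generating family whose image is an immersed submanifold of strictly increasing dimension until it is open, and then derives $d(x,y)\to 0$ as $y\to x$ from a softer length bound. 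Your version buys the quantitative H\"older modulus (hence the ball--box picture and a cleaner proof of (2)); the textbook version buys a shorter path to the purely topological statement without privileged coordinates. Your reduction of (1) and (2) to the local estimate, and the lower bound via domination of the Euclidean metric on a relatively compact chart (with the usual caveat that a curve exiting the chart already has length bounded below), are correct.

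The one substantive step you have not actually carried out is the one you flag yourself: the openness of $\Psi$ at the origin. This is not a cosmetic issue --- the na\"ive differential of $\Psi$ vanishes in every higher-bracket direction, the map is not $C^1$ where some $t_j=0$ because of the $|t_j|^{1/k}$ reparametrizations, and no unweighted inverse function theorem applies. The standard repairs are exactly the ones you name (an anisotropic expansion $\Psi(t)=x+\sum_j t_jY_j(x)+o(\|t\|_w)$ in the weighted quasi-norm followed by a Brouwer degree argument, or a full development of privileged coordinates), but as written the proposal asserts rather than proves this, and it is the heart of the theorem. Your fallback via the Sussmann--Nagano orbit theorem does close assertion (1) rigorously, but, as you note, it leaves (2) and the continuity of $d$ still resting on the unproven quantitative estimate. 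So: correct strategy, correct identification of the critical lemma, but that lemma is left open.
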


\begin{remark}
Since the sub-Riemannian structure is left-invariant,
the distance function is also left-invariant,
that is,
$dist(hx,hy)=dist(x,y)$ for all $h,x,y\in G$.
\end{remark}

\subsection{Popp's volume}\label{secpopp}
On a Riemannian Lie group $(G,g)$,
one has a canonical volume form defined by
$$\dv_R=\nu_1\wedge\cdots\wedge\nu_n,$$
where $\{\nu_1,\cdots\nu_n\}$ is a dual coframe of an orthonormal basis.
The induced measure $meas(\Omega):=\left|\int_{\Omega}\dv_R\right|$ $(\Omega\subset G)$ is called the \textit{volume measure}.

In sub-Riemannian geometry,
we also have a canonical volume form,
called \textit{Popp's volume} introduced in \cite{mon}.
For simplicity,
we only consider the $2$-step case.

We do not introduce the original definition of Popp's volume,
however,
we define it with local coordinates given in \cite{barriz}.
Let $U_1,\dots,U_n$ be an adapted frame.
Define the constant $c_{ij}^l$ by
$$[U_i,U_j]=\sum_{l=1}^nc_{ij}^lU_l.$$
We call them the \textit{structure constants}.
We define the $(n-m)$ square matrix $\mathbf{B}$ by
$$B_{hl}=\sum_{i,j=1}^m c_{ij}^h c_{ij}^l.$$

\begin{theorem}[Theorem 1 in \cite{barriz}]\label{thmpopp}
	Let $U_1,\dots,U_n$ be a local adapted frame,
	and $\nu^1,\dots,\nu^n$ the dual coframe.
	Then Popp's volume $\dv_{sR}$ is locally written by
	$$\dv_{sR}=\left(\det \mathbf{B}\right)^{-\frac{1}{2}}\nu^1\wedge\cdots\wedge \nu^n.$$
\end{theorem}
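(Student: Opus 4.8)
The plan is to check the identity pointwise at an arbitrary $x\in M$, which suffices because both sides are $n$-forms and Popp's volume is in any case defined only up to sign; I work in the $2$-step setting $\mathcal{D}^2_x=T_xM$ in which the construction has been recalled, so that $\dim(\mathcal{D}^2_x/\mathcal{D}_x)=n-m$. Fix the adapted frame $X_1,\dots,X_n$ with dual coframe $\nu^1,\dots,\nu^n$, and let $\bar X_{m+1},\dots,\bar X_n$ denote the classes of $X_{m+1},\dots,X_n$ in $\mathcal{D}^2_x/\mathcal{D}_x$, so that $X_1,\dots,X_m,\bar X_{m+1},\dots,\bar X_n$ is a basis of $gr_x(\mathcal{D})$ adapted to the graded structure. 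The first step is to observe that, by the very construction of the canonical isomorphism $\theta$ of Lemma \ref{lemfilt} for the filtration $\mathcal{D}_x\subset T_xM$, $\theta$ carries $\nu^1\wedge\cdots\wedge\nu^n$ to $\tilde\mu^1\wedge\cdots\wedge\tilde\mu^n$, where $\tilde\mu^1,\dots,\tilde\mu^n\in gr_x(\mathcal{D})^{*}$ is the basis dual to $X_1,\dots,X_m,\bar X_{m+1},\dots,\bar X_n$. Consequently $vol(\mathcal{D},g)_x=\theta^{*}\omega_x$ is determined once $\omega_x$ is expressed in terms of $\tilde\mu^1\wedge\cdots\wedge\tilde\mu^n$.

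Next I compare the adapted graded basis with an orthonormal one. On the summand $\mathcal{D}_x$ the vectors $X_1,\dots,X_m$ are already orthonormal, so they contribute nothing; on $\mathcal{D}^2_x/\mathcal{D}_x$ I choose an orthonormal basis $Y_{m+1},\dots,Y_n$ for $\langle\cdot,\cdot\rangle_2$ and write $\bar X_{m+h}=\sum_k A_{hk}Y_{m+k}$. A routine change-of-dual-basis computation then gives $\omega_x=\det(A)\,\tilde\mu^1\wedge\cdots\wedge\tilde\mu^n$, while the Gram matrix $G$ of $\bar X_{m+1},\dots,\bar X_n$ with respect to $\langle\cdot,\cdot\rangle_2$ equals $AA^{T}$; hence $|\det A|=(\det G)^{1/2}$ and, combining with the first step, $vol(\mathcal{D},g)_x=\pm(\det G)^{1/2}\,\nu^1\wedge\cdots\wedge\nu^n$. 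Everything is thus reduced to the identity $\det G=(\det B)^{-1}$.

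To establish this I use the description of $\langle\cdot,\cdot\rangle_2$ as the inner product induced on the quotient $\mathcal{D}^2_x/\mathcal{D}_x$ by the surjection $\pi\colon (\mathcal{D}_x\otimes\mathcal{D}_x,\,g_x\otimes g_x)\to\mathcal{D}^2_x/\mathcal{D}_x$, i.e.\ transported from $(\ker\pi)^{\perp}$ via $\pi$. With respect to the orthonormal basis $\{X_i\otimes X_j\}_{1\le i,j\le m}$ of the domain and the basis $\{\bar X_h\}_{m<h\le n}$ of the target, $\pi$ is represented by the $(n-m)\times m^{2}$ matrix $P$ with $P_{h,(i,j)}=c^{h}_{ij}$; since $c^{h}_{ii}=0$ and $c^{h}_{ji}=-c^{h}_{ij}$ one reads off $PP^{T}=B$, which is invertible because $\pi$ is onto. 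The minimal-norm preimage of $\bar X_h$ under $\pi$ lies in $(\ker\pi)^{\perp}$, the row space of $P$, and equals $P^{T}B^{-1}e_h$; therefore $\langle\bar X_h,\bar X_l\rangle_2=(P^{T}B^{-1}e_h)^{T}(P^{T}B^{-1}e_l)=e_h^{T}B^{-1}e_l$, that is, $G=B^{-1}$ and $\det G=(\det B)^{-1}$. Substituting into the previous paragraph gives $vol(\mathcal{D},g)=(\det B)^{-1/2}\,\nu^1\wedge\cdots\wedge\nu^n$.

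The part I expect to be delicate is the third paragraph: one has to be careful about which inner product on $\mathcal{D}^2_x/\mathcal{D}_x$ is in play — reconciling the ``minimum of $\|U(x)\|\,\|V(x)\|$'' description in the definition with the induced-quotient-inner-product description used above — and about the index conventions, so that $P$ (built from all pairs $(i,j)$, not only $i<j$) produces exactly the matrix $B$ of the statement and not a rescaled one. Once that is pinned down, the remaining ingredients — the behaviour of the isomorphism $\theta$ of Lemma \ref{lemfilt} and the change-of-frame determinant — are routine multilinear algebra. The argument goes through verbatim for any equiregular $2$-step structure; the general step-$s$ case is analogous but requires iterating the graded construction layer by layer.
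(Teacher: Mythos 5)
The paper does not prove this statement: it is imported verbatim as Theorem 1 of \cite{barriz}, so there is no internal proof to compare against. Your argument is correct and is essentially the standard proof from that reference: reduce via the canonical isomorphism $\theta$ of Lemma \ref{lemfilt} to a change-of-basis determinant on the graded space, and identify the Gram matrix of $\bar X_{m+1},\dots,\bar X_n$ with $(PP^{T})^{-1}=B^{-1}$ by taking the minimal-norm section of $\pi$; all three steps check out, including the observation that $B$ sums over all ordered pairs $(i,j)$.

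The point you flag as delicate is indeed the only real issue, and it is worth saying explicitly that the two descriptions do \emph{not} reconcile. The quantity $\min\{\|U(x)\|\,\|V(x)\|\}$ over horizontal fields with $[U,V]_x=z \bmod \mathcal{D}$ is the minimum of $\|w\|_{g\otimes g}$ over \emph{decomposable} preimages $w=u\otimes v$ of $z$ under $\pi$, which in general strictly exceeds the minimum over all preimages. Already on $H_1$ with $[X,Y]=Z$ one finds $\min\|U\|\,\|V\|=1$ (from $\|U\|^{2}\|V\|^{2}=(ad-bc)^{2}+(ac+bd)^{2}$ with $ad-bc=1$), whereas the quotient inner product gives $\|Z\|_2=1/\sqrt{2}$, which is what matches $(\det B)^{-1/2}=1/\sqrt{2}$ since $B=(c_{12}^{3})^{2}+(c_{21}^{3})^{2}=2$. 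So the formula of the theorem holds for the quotient-inner-product definition of $\langle\cdot,\cdot\rangle_2$ (the one actually used in \cite{mon} and \cite{barriz}), which is the one you adopt; the informal $\min\|U\|\|V\|$ description in Section \ref{secpopps} should be read as shorthand for that, as otherwise it neither satisfies the parallelogram law in general nor produces the stated constant. With that definitional point fixed, your proof is complete for the $2$-step case treated in the paper.
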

The induced measure $meas(\Omega):=\left|\int_\Omega\dv_{sR}\right|$ ($\Omega\subset G$) is called Popp's measure.

\begin{remark}
If a sub-Riemannian metric is corank $0$ i.e. $1$-step,
then Popp's volume coincides with the canonical volume form.
Indeed,
adapted frame of corank $0$ sub-Riemannian metric is an orthonormal basis.
\end{remark}

\section{Moduli space of left invariant metrics on compact nilmanifolds}\label{sec1}

Let $\Gamma< \H_n$ be a lattice.
We shall give a parametrization of isometry classes of left invariant sub-Riemannian metrics of corank $k=0,1$ on $\Gamma\backslash \H_n$.
Our argument is based on the construction of the moduli space of Riemannian metrics given by Gordon and Wilson in \cite{gor2}.

For a parametrization $(\mathbf{w},z)=(x_1,\dots,x_n,y_1,\dots,y_n,z)$ of $\H_n\simeq \mathbb{C}^n\times\R$,
we fix the basis $\left\{X_1,\dots,X_n,Y_1,\dots,Y_n,Z\right\}$ of the Lie algebra $\h_n$ by
$$X_i=\partial_{x_i}-\frac{y_i}{2}\partial_z,~~Y_i=\partial_{y_i}+\frac{x_i}{2}\partial_z,~~Z=\partial_z.$$
A straightforward computation shows that $[X_i,Y_{i}]=Z$ for all $i=1,\dots,n$ and the other brackets are zero.

Let $\exp:\h_n\to \H_n$ be the exponential map.
It is well known that the exponential map is a diffeomorphism.
We shall identify the Heisenberg Lie group $\H_n$ to its Lie algebra $\h_n$ via the exponential map.

Let $\langle\cdot,\cdot\rangle_0$ be the scalar product on $\h_n$ whose orthonormal basis is $\{X_1,\dots,Y_n,Z\}$.
For a linear endomorphism $\mathbf{A}\in \End_{\R}(\h_n)\simeq \mathrm{Mat}_{2n+1}(\R)$,
let $\langle\cdot,\cdot\rangle_\mathbf{A}$ be a scalar product on $\Im(\mathbf{A})$ induced from the norm
\[\|U\|_\mathbf{A}:=\min\{\|V\|_0\mid U=\mathbf{A}V\}.\]
It is equivalent to the following definition;
For $U,V\in \Im(\mathbf{A})$,
\[\langle U,V\rangle_\mathbf{A}:=\langle \mathbf{A}|_{\Ker(\mathbf{A})^{\perp}}^{-1}U,\mathbf{A}|_{\Ker(\mathbf{A})^{\perp}}^{-1}V\rangle_0,\]
where $\Ker(\mathbf{A})^{\perp}$ is the complementary subspace of $\Ker(\mathbf{A})$ with respect to the scalar product $\langle\cdot,\cdot\rangle_0$.
A pair $(\Im(\mathbf{A}),\langle\cdot,\cdot\rangle_\mathbf{A})$ gives a $($possibly non bracket generating$)$ sub-Riemannian metric.
Notice that $\Im(\mathbf{A})$ is bracket generating if and only if \begin{equation}\label{bracketgen}
    \Im(\mathbf{A})+\mathrm{Span}\{Z\}=\h_n.
\end{equation}
We shall denote the induced left invariant distance function on $\H_n$ by $dist_\mathbf{A}$.
Since a sub-Riemannian metric $(\Im(\mathbf{A}),\langle\cdot,\cdot\rangle_\mathbf{A})$ is left invariant,
we can define the sub-Riemannian metric on $\Gamma\backslash \H_n$ via the quotient map.
We shall denote such a quotient sub-Riemannian metric on $\Gamma\backslash H_n$ by $\overline{dist}_\mathbf{A}$.

We will classify the quotient sub-Riemannian metrics up to isometry.

\begin{lemma}\label{lemisometricsR}
Let $\mathbf{A},\mathbf{B}\in End_{\R}(\h_n)$ be matrices with the bracket generating condition (\ref{bracketgen}).
	Then $(\Gamma\backslash \H_n,\overline{dist}_\mathbf{A})$ is isometric to $(\Gamma\backslash \H_n,\overline{dist}_\mathbf{B})$ if and only if there is an automorphism $\Phi\in \Inn(\H_n)\cdot \Stab(\Gamma)<\Aut(\H_n)$ and $\mathbf{R}\in \O(2n+1)$ such that
 \[\Phi_\ast \mathbf{A}\mathbf{R}=\mathbf{B}.\]
	
\end{lemma}

	Our proof is based on Theorem 5.4 in \cite{gor},
 where the authors characterize the isometry classes of compact nilmanifolds with left invariant Riemannian metrics.
	For the proof,
 we use the following fact.
\begin{theorem}[Special case of Theorem 2 in \cite{kiv}]\label{thm2-3}
Let $dist_\mathbf{A},dist_\mathbf{B}$ be isometric left invariant sub-Riemannian metrics on $\H_n$.
	Then every isometry from $(\H_n,dist_\mathbf{A})$ to $(\H_n,dist_\mathbf{B})$ is a composition of an automorphism and an left translation.
\end{theorem}

\begin{proof}[Proof of Lemma \ref{lemisometricsR}]
	Let $\overline{F}:(\Gamma\backslash \H_n,\overline{dist}_\mathbf{A})\to (\Gamma\backslash \H_n,\overline{dist}_\mathbf{B})$ be an isometry.
	The isometry $\overline{F}$ lifts to an isometry $F:(\H_n,dist_\mathbf{A})\to(\H_n,dist_\mathbf{B})$.
By Theorem \ref{thm2-3},
there is an automorphism $\Phi_0\in \Aut(\H_n)$ such that $F=L_{F(e)}\circ \Phi_0$.
By left-invariance,
the automorphism $\Phi_0$ is also an isometry $(\H_n,dist_\mathbf{A})\to(\H_n,dist_\mathbf{B})$.
Since a smooth sub-Riemannian isometry preserves the metric tensor,
	we have 
 \[\Phi_{0\ast}(\Im(\mathbf{A}))=\Im(\mathbf{B})~~\text{and}~~\Phi_0^{\ast}\left\langle\cdot,\cdot\right\rangle_\mathbf{B}=\left\langle\cdot,\cdot\right\rangle_\mathbf{A}.\]

	The mapping $F\circ \Phi_0^{-1}$ is a self-isometry of $(\H_n,dist_\mathbf{B})$.
	Choose $x\in \H_n$ so that $\sigma=L_x\circ F\circ \Phi_0^{-1}$ is an isometry of $(\H_n,dist_\mathbf{B})$ preserving the identity.
	Again by Theorem \ref{thm2-3},
	$\sigma\in \Aut(\H_n)$.
Thus we obtain
	\[R_x\circ F=L_x^{-1}\circ R_x\circ \sigma\circ \Phi_0\in \Aut(\H_n).\]
Since the mapping $R_x\circ F$ is the lift of a diffeomorphism $R_x\circ \overline{F}:\Gamma\backslash \H_n\to\Gamma\backslash \H_n$,
$R_x\circ F\in \Stab(\Gamma)$ and $\sigma\circ\Phi_0\in \Inn(\H_n)\cdot \Stab(\Gamma)$.
Moreover since $\sigma$ is a self-isometry of $(\H_n,dist_\mathbf{B})$,
\begin{align*}
	&(\sigma\circ\Phi_0)^{\ast}\langle\cdot,\cdot\rangle_\mathbf{B}=\Phi_0^{\ast}\sigma^{\ast}\langle\cdot,\cdot\rangle_\mathbf{B}=\Phi_0^{\ast}\langle\cdot,\cdot\rangle_\mathbf{B}=\langle\cdot,\cdot\rangle_\mathbf{A},\\
	&(\sigma\circ \Phi_0)_{\ast}\Im(
\mathbf{A}))=\sigma_\ast\Phi_{0\ast}(\Im(\mathbf{A}))=\sigma_\ast(\Im(\mathbf{B}))=\Im(\mathbf{B}).
\end{align*}
Then $\Phi=\sigma\circ\Phi_0$ is the desired automorphism.
Indeed,
for all $U,V\in (\Ker \mathbf{A})^{\perp}$,
\begin{align*}
&\langle U,V\rangle_0=\langle \mathbf{A}U,\mathbf{A}V\rangle_\mathbf{A}=\Phi^\ast\langle \mathbf{A}U,\mathbf{A}V\rangle_\mathbf{B}\\
=&\langle \Phi_\ast \mathbf{A}U,\Phi_\ast \mathbf{A}V \rangle_\mathbf{B}=\langle \mathbf{B}|_{\Ker \mathbf{B}^{\perp}}^{-1}\Phi_\ast \mathbf{A}U,\mathbf{B}|_{\Ker \mathbf{B}^{\perp}}^{-1}\Phi_\ast \mathbf{A}V\rangle_0.
\end{align*}
Hence there is $\mathbf{R}\in \O(2n+1)$ such that $\mathbf{R}=\mathbf{B}^{-1}\Phi_\ast \mathbf{A}$ on $(\Ker \mathbf{A})^{\perp}$ and $\mathbf{R}(\Ker \mathbf{A})=\Ker \mathbf{B}$.

Next we show the converse implication.
Let $\Phi=L_x\circ R_x^{-1}\circ \varphi\in \Inn(H_n)\cdot \Stab(\Gamma)$ be an automorphism of $\H_n$ such that $\Phi_\ast \mathbf{A}\mathbf{R}=\mathbf{B}$,
in other words,
$\Phi_{\ast}(\Im(\mathbf{A}))=\Im(\mathbf{B})$ and $\Phi^{\ast}\langle\cdot,\cdot\rangle_\mathbf{B}=\langle\cdot,\cdot\rangle_\mathbf{A}$.
	Since $(\Im(\mathbf{A}),\langle\cdot,\cdot\rangle_\mathbf{A})$ is left-invariant,
	we have $\Phi_{\ast}(\Im(\mathbf{A}))=(R_x^{-1}\circ \varphi)_{\ast}(\Im(\mathbf{A}))$
	and $\Phi^{\ast}(\langle\cdot,\cdot\rangle_\mathbf{B})=(R_x^{-1}\circ \varphi)^{\ast}(\langle\cdot,\cdot\rangle_\mathbf{A})$.
	Moreover, the quotient of the map $R_x^{-1}\circ\varphi$ to $\Gamma\backslash \H_n$ is a diffeomorphism.
This quotient map is the desired isometry.
\end{proof}

By Lemma \ref{lemisometricsR},
isometry classes of sub-Riemannian metrics are characterized by the doublecoset space of matrices with the left multiplication 
 of $\Inn(\H_n)_\ast\cdot\Stab(\Gamma)_\ast$ and the right multiplication of $\O(2n+1)$.
 Up to the actions by $\Inn(\H_n)$ and $\O(2n+1)$,
 a matrix $\mathbf{A}\in \End_{\R}(\h_n)$ has the following canonical representative,
 which is independent of the choice of lattices.

\begin{lemma}\label{lem3-1}
	For $\mathbf{A}\in \End_{\R}(\h_n)$ with the bracket generating condition (\ref{bracketgen}),
	there is $\mathbf{R}\in \O(2n+1)$ and $\mathbf{P}=\Phi_\ast\in \Inn(\H_n)_{\ast}$ such that
	\begin{equation}\label{eq2-3}
	\mathbf{P}\mathbf{A}\mathbf{R}=\begin{pmatrix}
		\tilde{\mathbf{A}} & O\\
		 O & \rho_\mathbf{A}
	\end{pmatrix},
\end{equation}
	where $\tilde{\mathbf{A}}$ is a $2n\times 2n$ invertible matrix and $\rho_\mathbf{A}\in\R$.

	Moreover,
	let $\mathbf{P}'\in \Inn(H_n)_{\ast}$ and $\mathbf{R}'\in \O(2n+1)$ be other matrices such that (\ref{eq2-3}) hold.
Then they are unique in the following sense.
	\begin{itemize}
		\item There is $\tilde{\mathbf{R}}\in \O(2n)$ such that $\mathbf{R}'=\mathbf{R}\begin{pmatrix}
				\tilde{\mathbf{R}} & O\\
		O & \pm 1
	\end{pmatrix}$,
\item $\mathbf{P}'=\mathbf{P}$.
\end{itemize}
\end{lemma}

We call such a representative a \textit{matrix of weak canonical form}.

\begin{remark}
    If a matrix $\mathbf{A}\in \End_\R(\h_n)$ is invertible,
    then an orthonormal basis of the Riemannian metric $(\Im(\mathbf{A}),\langle\cdot,\cdot\rangle_\mathbf{A})$ is
    \[\{\mathbf{A}X_1,\dots,\mathbf{A}Y_n,\mathbf{A}Z\}=\{\tilde{\mathbf{A}}X_1,\dots,\tilde{\mathbf{A}}Y_n,\rho_\mathbf{A}Z\}.\]

    On the other hand,
    if a matrix of weak canonical form $\mathbf{A}\in \End_\R(\h_n)$ is corank $1$,
    then an orthonormal basis of the sub-Riemannian metric $(\Im(\mathbf{A}),\langle\cdot,\cdot\rangle_\mathbf{A})$ is
    \[\{\mathbf{A}X_1,\dots,\mathbf{A}Y_n\}=\{\tilde{\mathbf{A}}X_1,\dots,\tilde{\mathbf{A}}Y_n\}.\]
\end{remark}

\begin{proof}
	By multiplicating an appropriate orthogonal group $\mathbf{R}\in \O(2n+1)$ from the right,
	we can assume that the matrix $\mathbf{A}\mathbf{R}$ maps $[\h_n,\h_n]$ onto itself.
	Then we can write the matrix $\mathbf{A}\mathbf{R}$ as
	\begin{equation}\label{eq2-4}
		\mathbf{A}\mathbf{R}=\begin{pmatrix}
		\tilde{\mathbf{A}} & O\\
		\mathbf{a} & \rho_\mathbf{A}
	\end{pmatrix}\end{equation}
	with an invertible matrix $\tilde{\mathbf{A}}\in \GL_{2n}(\R)$,
	a vector $\mathbf{a}\in\R^{2n}$ and $\rho_\mathbf{A}\in\R$.
	
	For $g=(x_1,\dots,x_n,y_1,\dots,y_n,z)\in \H_n$,
	the matrix representation of the differential of the inner automorphism $\mathbf{P}_g=(i_g)_{\ast}$ is written by
	$$\mathbf{P}_g=\begin{pmatrix}
		I_{2n} & O\\
		\tilde{\mathbf{g}} & 1
	\end{pmatrix},$$
	where $\tilde{\mathbf{g}}=\left(-y_1,\dots,-y_{n},x_1,\dots,x_n\right)$ and $I_{2n}$ is the identity matrix of rank $2n$.
	With this terminology,
	we can write the matrix $\mathbf{P}_g\mathbf{A}\mathbf{R}$ as
	\begin{equation}\label{innerprod}
		\mathbf{P}_g\mathbf{A}\mathbf{R}=\begin{pmatrix}
		\tilde{\mathbf{A}} & O\\
		\mathbf{a}+\tilde{\mathbf{g}}\tilde{\mathbf{A}} & \rho_\mathbf{A}
	\end{pmatrix}.\end{equation}
	Since $\tilde{\mathbf{A}}$ is invertible,
	we can take a unique $\tilde{\mathbf{g}}$ such that $\mathbf{a}+\tilde{\mathbf{g}}\tilde{\mathbf{A}}=0$.

	\vspace{10pt}

	Next we will prove the uniqueness of such matrices.
	Let $\mathbf{R}'\in \O(2n+1)$ be another orthogonal matrix such that
	\begin{equation}\label{eqrprime}
		\mathbf{A}\mathbf{R}'=\begin{pmatrix}
		\tilde{\mathbf{A}}^{\prime} & O\\
		\mathbf{a}^{\prime} & \rho_\mathbf{A}^{\prime}
	\end{pmatrix}\end{equation}
 holds with some $\tilde{\mathbf{A}}^{\prime}\in\GL_{2n}(\R),\rho_\mathbf{A}^{\prime}\in\R$ and $\mathbf{a}^{\prime}\in\R^{2n}$.
	Let us write matrices $\mathbf{A}$,
	$\mathbf{R}$ and $\mathbf{R}'$ as collections of vectors by
	$$\mathbf{A}=\begin{pmatrix}
		\mathbf{a}_1\\
		\vdots\\
		\mathbf{a}_{2n+1}
	\end{pmatrix},~
	\mathbf{R}=\begin{pmatrix}
		\mathbf{r}_1 & \cdots & \mathbf{r}_{2n+1}
		\end{pmatrix},~
		\mathbf{R}'=\begin{pmatrix}
		\mathbf{r^{\prime}}_1 & \cdots & \mathbf{r^{\prime}}_{2n+1}
	\end{pmatrix}.
	$$
From the equality (\ref{eq2-4}) and (\ref{eqrprime}),
we have
$$\begin{cases}
	\mathbf{a}_i\cdot \mathbf{r}_{2n+1}=\mathbf{a}_i\cdot \mathbf{r^{\prime}}_{2n+1}=0~~~~\text{for}~1\leq i\leq 2n,\\
	\mathbf{a}_{2n+1}\cdot \mathbf{r}_{2n+1}=\rho_\mathbf{A},\\
	\mathbf{a}_{2n+1}\cdot \mathbf{r^{\prime}}_{2n+1}=\rho_\mathbf{A}^{\prime}
\end{cases}.$$
The first equality implies that the two vectors $\mathbf{r}_{2n+1}$ and $\mathbf{r^{\prime}}_{2n+1}$ are unit vector orthogonal to $\textrm{Span}\{\mathbf{a}_1,\dots,\mathbf{a}_{2n}\}$.
Hence we have $\mathbf{r}_{2n+1}=\pm \mathbf{r^{\prime}}_{2n+1}$ and $\rho_\mathbf{A}=\pm \rho_\mathbf{A}^{\prime}$.

Moreover,
since $\left\{\mathbf{r}_1,\dots,\mathbf{r}_{2n}\right\}$ and $\left\{\mathbf{r^{\prime}}_1,\dots,\mathbf{r^{\prime}}_{2n}\right\}$ are orthonormal bases of the orthogonal complement of $\mathbf{r}_{2n+1}$,
there is $\tilde{\mathbf{R}}\in O(2n)$ such that
$$^{t}\mathbf{R}\mathbf{R}'=\begin{pmatrix}
	\tilde{\mathbf{R}} & O\\
	O & \pm 1
\end{pmatrix}.$$
This shows the uniqueness of orthogonal matrices $\mathbf{R}$.

We pass to the uniqueness of the inner automorphism $\mathbf{P}_g$.
Let $g'$ be another element in $\H_n$ and $\mathbf{R}'$ the orthogonal matrix such that (\ref{eqrprime}) holds.
In the same way to (\ref{innerprod}),
the matrix representation of $\mathbf{P}_{g^{\prime}}\mathbf{A}\mathbf{R}'$ is
$$\mathbf{P}_{g^{\prime}}\mathbf{A}\mathbf{R}'=\begin{pmatrix}
	\tilde{\mathbf{A}}\tilde{\mathbf{R}} & O\\
	\left(\mathbf{a}+\tilde{\mathbf{g}^{\prime}}\tilde{\mathbf{A}}\right)\tilde{\mathbf{R}} & \pm \rho_\mathbf{A}
\end{pmatrix}.$$
Since $\tilde{\mathbf{A}}$ and $\tilde{\mathbf{R}}$ are invertible,
the matrix $\mathbf{P}_{g^{\prime}}\mathbf{A}\mathbf{R}'$ is weak canonical if and only if $\tilde{\mathbf{g}}=\tilde{\mathbf{g}}^{\prime}$.

\end{proof}

Next we compute the stabilizer group $\Stab(\Gamma)$.
We recall the classification of isomorphism classes of lattices.
Let $D_n$ be the set of $n$-tuples of positive integers $\mathbf{r}=(r_1,\dots,r_n)$ such that $r_{i}$ divides $r_{i+1}$ for all $i=1,\dots,n$.
For $\mathbf{r}\in D_n$,
let $\Gamma_{\mathbf{r}}< \H_n$ be a subgroup generated by $r_1X_1,\dots,r_nX_n,Y_1,\dots,Y_n,Z$, that is
$$\Gamma_{\mathbf{r}}=\langle r_1X_1,\dots,r_nX_n,Y_1,\dots,Y_n,Z\rangle.$$

\begin{theorem}[Theorem 2.4 in \cite{gor2}]
	Any lattice $\Gamma<\H_n$ is isomorphic to $\Gamma_{\mathbf{r}}$ for some $\mathbf{r}\in D_n$.
Moreover,
$\Gamma_{\mathbf{r}}$ is isomorphic to $\Gamma_{\bm{s}}$ if and only if $\mathbf{r}=\bm{s}$.
\end{theorem}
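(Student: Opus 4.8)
The plan is to realize an arbitrary lattice $\Gamma < H_n$ as a subgroup of a rational form of $H_n$, put a basis of $\Gamma$ into Smith normal form, and then read off the invariant tuple $\bm{r} \in D_n$; the uniqueness statement will follow by identifying $\bm{r}$ with the torsion-free invariants of the abelianization $\Gamma/[\Gamma,\Gamma]$ together with the index of $[\Gamma,\Gamma]$ in the center $\Gamma \cap \exp(\R Z)$. First I would recall the standard structure theory of lattices in simply connected nilpotent Lie groups (Mal'cev): since $H_n$ is $2$-step with $[\h_n,\h_n] = \R Z$, any lattice $\Gamma$ meets the center in a lattice of $\exp(\R Z) \cong \R$, so $\Gamma \cap Z(H_n) = \exp(\Z c Z)$ for some $c > 0$; after rescaling the basis vector $Z$ (an automorphism of $\h_n$) we may assume $c=1$, i.e. $Z \in \Gamma$. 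Then $\bar{\Gamma} := \Gamma / (\Gamma \cap Z(H_n))$ is a lattice in the abelian quotient $H_n / Z(H_n) \cong \R^{2n}$, hence a free abelian group of rank $2n$.

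Next I would choose a set-theoretic lift of a $\Z$-basis of $\bar\Gamma$ to elements $v_1, \dots, v_{2n}$ of $\Gamma \subset \h_n$ (using $\exp$ to identify $H_n$ with $\h_n$, and the Campbell--Baker--Hausdorff formula \eqref{eqbaker} to compute products), so that $\Gamma = \langle v_1, \dots, v_{2n}, Z \rangle$. The commutators $[v_i, v_j]$ all lie in $\R Z \cap \Gamma = \Z Z$, so the map $(v_i, v_j) \mapsto [v_i, v_j] \in \Z$ defines an integer-valued alternating form $\omega$ on $\bar\Gamma \cong \Z^{2n}$. Now I apply the structure theorem for alternating forms on free $\Z$-modules (the symplectic/skew Smith normal form): there is a $\Z$-basis $e_1, f_1, \dots, e_n, f_n$ of $\bar\Gamma$ and positive integers $r_1 \mid r_2 \mid \cdots \mid r_n$ with $\omega(e_i, f_i) = r_i$ and all other pairings zero. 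Pulling this basis back to generators $\tilde X_i, \tilde Y_i$ of $\Gamma$ (lifting $e_i, f_i$) gives $[\tilde X_i, \tilde Y_i] = r_i Z$ and the remaining brackets trivial; comparing with the relations defining $\Gamma_{\bm r}$ — where $[r_i X_i, Y_i] = r_i[X_i,Y_i] = r_i Z$ — exhibits an isomorphism $\Gamma \cong \Gamma_{\bm r}$ with $\bm r = (r_1, \dots, r_n) \in D_n$. One should double-check that the lift can be chosen so that no extra central correction terms survive: since we already arranged $Z \in \Gamma$ and $\Gamma \cap \exp(\R Z) = \Z Z$, any discrepancy between a chosen lift and the desired generator lies in $\Z Z$ and can be absorbed, so this is routine.

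For uniqueness, suppose $\Gamma_{\bm r} \cong \Gamma_{\bm s}$ via a group isomorphism $\psi$. Then $\psi$ carries $[\Gamma_{\bm r}, \Gamma_{\bm r}]$ to $[\Gamma_{\bm s}, \Gamma_{\bm s}]$ and the (unique maximal) center to the center, hence induces an isomorphism of the abelianizations; but $\Gamma_{\bm r}/[\Gamma_{\bm r},\Gamma_{\bm r}]$ is $\Z^{2n} \oplus \bigoplus_i (\Z / r_i \Z)$ (the torsion coming from $[r_iX_i,Y_i]=r_iZ$ forcing $Z$ to be $r_i$-divisible modulo commutators in each $i$-slot), whose torsion invariants in the form $r_1 \mid \cdots \mid r_n$ are exactly $\bm r$. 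Since isomorphic abelian groups have the same invariant factors, $\bm r = \bm s$. The one point needing care here is book-keeping the torsion subgroup of the abelianization correctly — equivalently, phrasing everything as: $\bm r$ is the tuple of elementary divisors of the commutator form $\omega$ on $\Gamma/(\Gamma \cap Z(H_n))$ valued in $\Gamma \cap [H_n,H_n] \cong \Z$ — which is manifestly an isomorphism invariant. I expect the main obstacle to be purely organizational rather than conceptual: cleanly separating the choices (rescaling $Z$, choosing lifts) from the canonical data ($\omega$ and its Smith form), so that the argument both produces the normal form and proves its uniqueness without circularity. This is, in essence, the proof in \cite{gor2}, and I would cite Theorem 2.4 there for the details.
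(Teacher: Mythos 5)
The paper does not prove this statement; it is quoted verbatim from Gordon--Wilson (Theorem 2.4 in \cite{gor2}), so there is no internal proof to compare against. Your existence argument is the standard one and is sound: intersect $\Gamma$ with the center, normalize so that $\Gamma\cap Z(H_n)=\exp(\Z Z)$, observe that the commutator pairing descends to a nondegenerate integer-valued alternating form on $\bar\Gamma\cong\Z^{2n}$ (nondegeneracy, which you should state explicitly, follows because the form rationalizes to the standard symplectic form on $\h_n/[\h_n,\h_n]$), and apply the skew Smith normal form; the central-extension bookkeeping you flag is indeed routine since $H^2(\Z^{2n};\Z)$ classifies such extensions by their alternating part.

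The uniqueness argument, however, contains a genuine error as primarily stated. The abelianization of $\Gamma_{\bm r}$ is \emph{not} $\Z^{2n}\oplus\bigoplus_i(\Z/r_i\Z)$. There is only one central generator $Z$, and abelianizing imposes $Z^{r_i}=1$ for every $i$ simultaneously, so the torsion subgroup is $\Z/\gcd(r_1,\dots,r_n)\Z=\Z/r_1\Z$; the abelianization is $\Z^{2n}\oplus\Z/r_1\Z$ and recovers only $r_1$. Concretely, $\Gamma_{(1,1)}$ and $\Gamma_{(1,2)}$ in $H_2$ both have abelianization $\Z^4$, yet are non-isomorphic, so the step ``isomorphic abelian groups have the same invariant factors, hence $\bm r=\bm s$'' fails. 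Your parenthetical reformulation is not equivalent to the abelianization computation --- it is the correct argument and the one you must use: $Z(\Gamma_{\bm r})=\Gamma_{\bm r}\cap\exp(\R Z)$ is characteristic, the commutator pairing $\Gamma_{\bm r}/Z(\Gamma_{\bm r})\times\Gamma_{\bm r}/Z(\Gamma_{\bm r})\to Z(\Gamma_{\bm r})\cong\Z$ is intrinsic, and its elementary divisors are exactly $r_1\mid\cdots\mid r_n$, hence an isomorphism invariant. With the uniqueness step rewritten in those terms the proof is complete and is, as you suspect, essentially the argument of \cite{gor2}.
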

We compute $\Stab(\Gamma_{\mathbf{r}})_{\ast}$ for a fixed $n$-tuple $\mathbf{r}\in D_n$.
Let
$$\mathbf{diag}(\mathbf{r})=\mathbf{diag}(r_1,\dots,r_n,1,\dots,1)$$
be the diagonal $2n\times 2n$ matrix,
and define the skew-symmetric matrix
$$\mathbf{J}_n=\begin{pmatrix}
	O & \mathbf{I}_n\\
	-\mathbf{I}_n & O
\end{pmatrix}.$$
Let $\widetilde{\Sp}(2n,\R)$ be the union of symplectic and anti-symplectic matrices
$$\widetilde{\Sp}(2n,\R)=\left\{\mathbf{S}\in \GL_{2n}(\R)\mid ^{t}\mathbf{S} \mathbf{J}_n
	\mathbf{S}=\epsilon(\mathbf{S})\mathbf{J}_n
,~\epsilon(\mathbf{S})=\pm 1 \right\}.$$
We embed $\widetilde{\Sp}(2n,\R)$ into $\GL_{2n+1}(\R)$ via the mapping $\iota:\mathbf{S}\mapsto \begin{pmatrix}
	\mathbf{S} & 0\\
	0 & \epsilon(\mathbf{S})
\end{pmatrix}$.
With these notations,
we give representations of $\Stab(\Gamma_{\mathbf{r}})_{\ast}$ as follows.

\begin{theorem}[Theorem 2.7 in \cite{gor2}]
	The matrix representation of $\Stab(\Gamma_{\mathbf{r}})_{\ast}$ is given by
	$$\Pi_{\mathbf{r}}=\iota\left(G_{\mathbf{r}}\cap \widetilde{\Sp}(2n,\R)\right),$$
	where $G_{\mathbf{r}}=\mathbf{diag}(\mathbf{r}) \GL_{2n}(\Z)\mathbf{diag}(\mathbf{r})^{-1}$.
\end{theorem}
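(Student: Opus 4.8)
**Proof proposal for Theorem 2.7 (the computation of $Stab(\Gamma_{\bm r})_*$).**

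The plan is to identify $Stab(\Gamma_{\bm r})$ with the subgroup of $Aut(H_n)$ that preserves the lattice $\Gamma_{\bm r}$, pass to the Lie algebra level via $\exp$, and then impose two constraints: preservation of the integer structure of $\Gamma_{\bm r}$, and compatibility with the Lie bracket (equivalently, with the symplectic form encoded by $J_n$). First I would recall that, since $H_n$ is simply connected nilpotent, $\exp$ is a diffeomorphism intertwining $Aut(H_n)$ with $Aut(\h_n)$, so $Stab(\Gamma_{\bm r})_*$ is a subgroup of $Aut(\h_n) \subset GL_{2n+1}(\R)$. An automorphism $\Phi$ of $\h_n$ must preserve the derived algebra $[\h_n,\h_n] = \R Z$ and hence acts as a scalar $\epsilon$ on $Z$; writing $\Phi$ in the basis $\{X_i,Y_i,Z\}$, the same argument as in Lemma \ref{lem3-1} (combined with the fact that $Stab(\Gamma)$ matrices need not shear the $Z$-row once we are on the Lie algebra and respect the lattice) lets me put $\Phi$ in block form $\begin{pmatrix}\beta & 0\\ 0 & \epsilon\end{pmatrix}$ with $\beta \in GL_{2n}(\R)$ — here I would use that $\Phi$ must send $Z \in \Gamma_{\bm r}$ to $\pm Z$ and send the sublattice of $[\h_n,\h_n]$-complement into itself. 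The bracket relation $[X_i,Y_i]=Z$ translates into $\beta^{t}J_n \beta = \epsilon J_n$, i.e. $\beta \in \widetilde{Sp}(2n,\R)$ with $\epsilon(\beta)=\epsilon$, which is exactly the image of $\iota$.

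Next I would pin down the integrality condition. The lattice $\Gamma_{\bm r}$ corresponds, under $\exp$, to the set $\mathrm{diag}(\bm r)\Z^{n} \times \Z^n \times \Z$ inside $\h_n$ (up to the standard identification and the Baker--Campbell--Hausdorff correction, which is harmless here because the bracket is integral on this lattice). An automorphism of block form $\begin{pmatrix}\beta & 0\\ 0 & \epsilon\end{pmatrix}$ preserves this set of lattice generators on the first $2n$ coordinates precisely when $\beta$ maps $\mathrm{diag}(\bm r)\Z^n \times \Z^n$ onto itself, i.e. when $\mathrm{diag}(\bm r)^{-1}\beta\,\mathrm{diag}(\bm r) \in GL_{2n}(\Z)$, which is the condition $\beta \in G_{\bm r} = \mathrm{diag}(\bm r)\,GL_{2n}(\Z)\,\mathrm{diag}(\bm r)^{-1}$. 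Combining the two constraints gives $\beta \in G_{\bm r} \cap \widetilde{Sp}(2n,\R)$, and applying $\iota$ yields $\Pi_{\bm r} = \iota\big(G_{\bm r}\cap \widetilde{Sp}(2n,\R)\big)$; conversely any such $\beta$ manifestly defines an automorphism of $\h_n$ fixing the lattice, so the inclusion is an equality.

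The main obstacle I expect is the bookkeeping at the lattice level: one has to be careful that $Stab(\Gamma_{\bm r})$ is defined as the stabilizer of the coset space $\Gamma_{\bm r}\backslash H_n$ up to the subtlety that inner automorphisms by elements of $\Gamma_{\bm r}$ act trivially on the quotient, so strictly speaking $Stab(\Gamma_{\bm r})_*$ should be read as the image in $Aut(\h_n)$ of automorphisms normalizing $\Gamma_{\bm r}$, and one must check this image is exactly the matrix group described (inner automorphisms by lattice elements contribute only the shear in the $Z$-row, which is why the clean block form survives after we mod out, consistently with the $P' = P$ uniqueness in Lemma \ref{lem3-1}). A second, more routine, point to verify carefully is that an automorphism preserving the \emph{generators} listed for $\Gamma_{\bm r}$ is the same as one preserving the group $\Gamma_{\bm r}$ itself — this follows because $\Gamma_{\bm r}$ is generated by those elements and $Z$ generates the center, but it is worth stating. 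Since Theorem 2.7 is quoted from \cite{gor2} (Theorem 2.7 there), I would present this as a sketch, citing Gordon--Wilson for the full details of the lattice combinatorics.
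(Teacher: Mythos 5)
The paper does not prove this statement; it is quoted verbatim from Gordon--Wilson (Theorem 2.7 of \cite{gor2}), so there is no internal proof to compare against. Your sketch is a correct outline of the standard argument and matches the structure of the original: pass to $Aut(\h_n)$ via $\exp$, use preservation of the center to get the block form with the $\widetilde{Sp}$-condition ${}^{t}\beta J_n\beta=\epsilon J_n$ from compatibility with the bracket, and extract the integrality condition $\beta\in diag(\bm{r})\,GL_{2n}(\Z)\,diag(\bm{r})^{-1}$ from preservation of the projected lattice. You also correctly flag the two genuine subtleties: that $Stab(\Gamma_{\bm{r}})_{\ast}$ must be read modulo the shear contributed by inner automorphisms (consistent with Lemma \ref{lem3-1} and with how $\Pi_{\bm{r}}$ acts in Theorem \ref{dfn2-3}), and that the passage from generators to the group requires care.

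The one place where your sketch understates the difficulty is the converse inclusion, which you call ``manifest.'' It is not: $\log\Gamma_{\bm{r}}$ is not simply $diag(\bm{r})\Z^{2n}\times\Z$ because the Campbell--Baker--Hausdorff correction $\tfrac{1}{2}[U,V]$ introduces half-integer shifts in the $Z$-coordinate, so showing that \emph{every} $\beta\in G_{\bm{r}}\cap\widetilde{Sp}(2n,\R)$ genuinely stabilizes $\Gamma_{\bm{r}}$ (rather than only a finite-index subgroup of such $\beta$ doing so) requires tracking these corrections; this is precisely where the divisibility structure of $\bm{r}$ enters in \cite{gor2}. Deferring that bookkeeping to Gordon--Wilson is legitimate for a quoted theorem, but it is the nontrivial half of the argument rather than a routine verification.
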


Now we have prepared to give a parametrization of sub-Riemannian metrics of corank $k=0,1$.

\begin{theorem}\label{dfn2-3}
	The isometry classes of sub-Riemannian metrics of corank $k=0$ or $1$ on $\Gamma_{\mathbf{r}}\backslash \H_n$ has one-to-one correspondance with the double coset space
	\[
		\mathcal{M}(\Gamma_{\mathbf{r}}\backslash \H_n):=\Pi_{\mathbf{r}}\backslash \left(\GL_{2n}(\R)\times \R\right)/\left(\O(2n)\times\left\{\pm 1\right\}\right),
\]
where Riemannian metrics correspond to
\[\mathcal{M}_0(\Gamma_{\mathbf{r}}\backslash \H_n):=\Pi_{\mathbf{r}}\backslash \left(\GL_{2n}(\R)\times \R^*\right)/\left(\O(2n)\times\left\{\pm 1\right\}\right),\]
and sub-Riemannian metrics of corank $1$ correspond to
\[\mathcal{M}_1(\Gamma_{\mathbf{r}}\backslash \H_n)=\Pi_{\mathbf{r}}\backslash \left(\GL_{2n}(\R)\times \{0\}\right)/\left(\O(2n)\times\left\{\pm 1\right\}\right).\]

\end{theorem}

For $k=0,1$,
denote by $\mathcal{O}_k$ the quotient topology on $\mathcal{M}_k(\Gamma_{\mathbf{r}}\backslash \H_n)$.
We call the topological space $(\mathcal{M}_k(\Gamma_{\mathbf{r}}\backslash \H_n),\mathcal{O}_k)$ the \textit{moduli space of corank $k$}.

Notice that the action on the second factor of the ambient space $\GL_{2n}(\R)\times \R$ only changes its sign.
Moreover,
any matrices acting upon the first factor has determinant $\pm 1$.
This observation shows the following lemma.
\begin{lemma}\label{lem2-2}
	For matrices $\mathbf{A},\mathbf{B}\in \End(\h_n)$ of weak canonical form with $[\mathbf{A}]=[\mathbf{B}]$ in $\mathcal{M}(\Gamma_{\mathbf{r}}\backslash \H_n)$,
	we have
	\begin{itemize}
		\item $|\det(\tilde{\mathbf{A}})|=|\det(\tilde{\mathbf{B}})|$,
		\item $|\rho_\mathbf{A}|=|\rho_\mathbf{B}|.$
		\end{itemize}
\end{lemma}

\begin{remark}\label{rmk2-1}
	The double coset space $G_{\mathbf{r}}\backslash GL_{2n}(\R)/O(2n)$ is homeomorphic to $GL_{2n}(\Z)\backslash GL_{2n}(\R)/O(2n)$,
	which is the moduli space of flat tori of dimension $2n$.
	However,
	the discrete subgroup $G_{\mathbf{r}}\cap \widetilde{Sp}(2n,\R)<G_{\mathbf{r}}\simeq\GL_{2n}(\R)$ is infinite index for $n\geq 2$.
	Hence the classical Mahler's compactness theorem does not directly imply the precompactness of subsets in $\mathcal{M}(\Gamma_{\mathbf{r}}\backslash\H_n)$.
	Boldt's work in \cite{bol2} is to fulfill this gap by using the $\mathrm{j}$-operator defined in the next section.
\end{remark}

\begin{remark}
	The cometric tensor associated to a matrix $\mathbf{A}$ is $\mathbf{A}^{t}\mathbf{A}$ in the basis $\{X_1,\dots,X_n\}$.
\end{remark}

\section{Curvature and volume form on the Heisenberg Lie group}\label{sec3}

In this section,
we recall the Ricci curvature and Popp's volume on the Heisenberg Lie groups.

\subsection{$\mathrm{j}$-operator}

Let $\mathfrak{v}\subset\h_n$ be the subspace spanned by the $2n$ vectors $X_1,\dots,X_n,Y_1,\dots,Y_n$,
and $\omega$ the contact form on $\H_n$ such that $\Ker \omega=\mv$ and $\omega(Z)=1$.
In other words,
the contact form $\omega$ is the dual covector of $Z$.
The following $\mathrm{j}$-operator plays a central role in the study of the Heisenberg Lie group.

\begin{definition}[$\mathrm{j}$-operator]
	For a matrix $\mathbf{A}\in \End(\h_n)$ of weak canonical form,
	define the operator $j_\mathbf{A}:\mv\to \mv$ by
	$$\langle j_\mathbf{A}(U),V\rangle_\mathbf{A}=-d\omega(U,V)=\omega([U,V])~~~~\text{for all}~~U,V\in \mv.$$
\end{definition}

The $\mathrm{j}$-operator was used to describe the curvature tensor (\cite{ebe}),
the cut time of geodesics (\cite{gorn} and \cite{riz}),
and precompact subsets in the moduli space of Riemannian metrics (\cite{bol2}).

The following matrix representation of $j_\mathbf{A}$ is useful for later arguments.

\begin{lemma}\label{lemmat}

	The matrix representation of $j_\mathbf{A}$ in the basis $\left\{\mathbf{A}X_1,\dots,\mathbf{A}X_{2n}\right\}$ is $\tilde{\mathbf{A}}^{t}\mathbf{J}_n\tilde{\mathbf{A}}$ ,
\end{lemma}

\begin{proof}
	For $i,j=1,\dots,2n$,
	the $(i,j)$-th element of the matrix $^{t}\tilde{\mathbf{A}}\mathbf{J}_n\tilde{\mathbf{A}}$ is
	\begin{align*}
		\begin{pmatrix} a_{i\hspace{.1em}1} & \cdots & a_{i\hspace{.1em}2n}\end{pmatrix}
		\mathbf{J}_n
		\begin{pmatrix}
			a_{1\hspace{.1em}j}\\
\vdots\\
a_{2n\hspace{.1em}j}
\end{pmatrix}=\sum_{k=1}^na_{k\hspace{.1em}i}a_{k+n\hspace{.1em}j}-a_{k+n\hspace{.1em}i}a_{k\hspace{.1em}j}.
\end{align*}

It coincides with the $(i,j)$-th element of the matrix representation of $j_\mathbf{A}$.
Indeed,
for example $i,j=1,\dots,n$,
	\begin{align*}
		\langle j_\mathbf{A}(\mathbf{A}X_i),\mathbf{A}X_j\rangle_\mathbf{A}&=\omega([\mathbf{A}X_i,\mathbf{A}X_j])\\
					      &=\omega\left(\left[\sum_{k=1}^{n}a_{k\hspace{.1em}i}X_k+a_{k+n\hspace{.1em}i}Y_k,\sum_{k=1}^{n}a_{k\hspace{.1em}j}X_k+a_{k+n\hspace{.1em}j}Y_k\right]\right)\\
					      &=\sum_{k=1}^{n}a_{k\hspace{.1em}i}a_{k+n\hspace{.1em}j}-a_{k+n\hspace{.1em}i}a_{k\hspace{.1em}j}.
	\end{align*}
 The same computation holds also for $n<i,j<2n$.
\end{proof}

Since the operator $j_\mathbf{A}$ is skew symmetrizable,
its eigenvalue is purely imaginary.

\begin{definition}
	For a matrix $\mathbf{A}\in\End_\R(\h_n)$,
	define the positive numbers $d_1(\mathbf{A})\leq\cdots\leq d_n(\mathbf{A})$ so that $\pm\sqrt{-1}d_1(\mathbf{A}),\dots,\pm\sqrt{-1}d_n(\mathbf{A})$ are the eigenvalues of the $j_\mathbf{A}$.
\end{definition}

\begin{lemma}\label{lemdn}
	For matrices $\mathbf{A},\mathbf{B}\in\End_\R(\h_n)$ of weak canonical form with $[\mathbf{A}]=[\mathbf{B}]$ in $\mathcal{M}(\Gamma\backslash H_n)$,
	we have $d_i(\mathbf{A})=d_i(\mathbf{B})$ for all $i=1,\dots,n$.
\end{lemma}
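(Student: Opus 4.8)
The plan is to trace how the equivalence $[A]=[B]$ in $\mathcal{M}(\Gamma_{\bm r}\backslash H_n)$ acts on the $2n\times 2n$ block $\tilde A$, and then to see that the skew-symmetric form $\hspace{1pt}^{t}\hspace{-2pt}\tilde A J_n\tilde A$ is changed only by a congruence that preserves the numbers $d_i$. By Theorem \ref{dfn2-3}, since $A$ and $B$ are both of weak canonical form and represent the same point of the moduli space, there are $\Phi_\ast\in\Pi_{\bm r}$, a matrix $\tilde R\in O(2n)$ and a sign $\epsilon=\pm1$ such that $B=\Phi_\ast A\begin{pmatrix}\tilde R & O\\ O & \epsilon\end{pmatrix}$; here $\Phi_\ast=\iota(\beta)$ for some $\beta\in G_{\bm r}\cap\widetilde{Sp}(2n,\R)$, so $\Phi_\ast=\begin{pmatrix}\beta & 0\\ 0 & \epsilon(\beta)\end{pmatrix}$ with $\beta J_n\hspace{1pt}^{t}\hspace{-2pt}\beta=\epsilon(\beta)J_n$ (equivalently $\hspace{1pt}^{t}\hspace{-2pt}\beta J_n\beta=\epsilon(\beta)J_n$, since $\widetilde{Sp}$ is closed under transpose). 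Reading off the upper-left $2n\times2n$ block of this matrix identity gives $\tilde B=\beta\,\tilde A\,\tilde R$.

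Next I would substitute this into the matrix representation of the $j$-operator supplied by Lemma \ref{lemmat}. We compute
\begin{equation}
\hspace{1pt}^{t}\hspace{-2pt}\tilde B J_n \tilde B
=\hspace{1pt}^{t}\hspace{-2pt}\tilde R\,\hspace{1pt}^{t}\hspace{-2pt}\tilde A\,\hspace{1pt}^{t}\hspace{-2pt}\beta\, J_n\,\beta\,\tilde A\,\tilde R
=\epsilon(\beta)\,\hspace{1pt}^{t}\hspace{-2pt}\tilde R\,\bigl(\hspace{1pt}^{t}\hspace{-2pt}\tilde A J_n \tilde A\bigr)\,\tilde R,
\end{equation}
using $\hspace{1pt}^{t}\hspace{-2pt}\beta J_n\beta=\epsilon(\beta)J_n$. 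Thus the skew-symmetric matrix attached to $B$ is obtained from that attached to $A$ by an orthogonal congruence $M\mapsto\hspace{1pt}^{t}\hspace{-2pt}\tilde R M\tilde R$ followed by multiplication by the sign $\epsilon(\beta)$. An orthogonal congruence preserves the spectrum of a (real) matrix, and multiplying a skew-symmetric matrix by $-1$ only interchanges $+\sqrt{-1}d_i$ with $-\sqrt{-1}d_i$; in either case the unordered set $\{\pm\sqrt{-1}d_1,\dots,\pm\sqrt{-1}d_n\}$ is unchanged. Hence the positive numbers $d_1\le\cdots\le d_n$ read off from $\hspace{1pt}^{t}\hspace{-2pt}\tilde B J_n\tilde B$ coincide with those read off from $\hspace{1pt}^{t}\hspace{-2pt}\tilde A J_n\tilde A$, which is exactly $d_i(A)=d_i(B)$.

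The only delicate point is bookkeeping: one must make sure that the specific combination of the three allowed moves — left multiplication by $\iota(\beta)\in\Pi_{\bm r}$, right multiplication by $\mathrm{diag}(\tilde R,\epsilon)$, and passage to weak canonical form — really does restrict on the $\tilde A$-block to $\tilde A\mapsto\beta\tilde A\tilde R$ with $\beta\in\widetilde{Sp}(2n,\R)$, and in particular that the extra inner automorphism $P\in Inn(H_n)_\ast$ absorbed by Lemma \ref{lem3-1} does not disturb the block structure. But $P$ is block lower-triangular with identity $2n\times2n$ block (cf. the expression for $P_g$ in the proof of Lemma \ref{lem3-1}), so it acts trivially on $\tilde A$; and the uniqueness clause of Lemma \ref{lem3-1} guarantees that any two weak-canonical representatives of $[A]$ differ by precisely the data $(\Phi_\ast,\tilde R,\epsilon)$ described above. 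Once this is set up, the computation displayed is immediate, so I expect no real obstacle beyond correctly invoking Theorem \ref{dfn2-3} and Lemma \ref{lem3-1}.
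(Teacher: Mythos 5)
Your proposal is correct and follows essentially the same route as the paper: the paper's proof likewise extracts $\tilde{B}=P\tilde{A}R$ with $P\in G_{\bm{r}}\cap\widetilde{Sp}(2n,\R)$ and $R\in O(2n)$ from the construction of the moduli space, and then observes $\hspace{1pt}^{t}\hspace{-2pt}\tilde{B}J_n\tilde{B}=\pm\hspace{1pt}^{t}\hspace{-2pt}R\,(\hspace{1pt}^{t}\hspace{-2pt}\tilde{A}J_n\tilde{A})R$, so the eigenvalues $\pm\sqrt{-1}d_i$ are preserved. Your additional bookkeeping (checking that the inner automorphism acts trivially on the $\tilde{A}$-block and invoking the uniqueness clause of Lemma \ref{lem3-1}) is more explicit than the paper's one-line appeal to ``the construction of the moduli space,'' but it is the same argument.
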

\begin{proof}
	From the construction of the moduli space,
	there is a matrix $\mathbf{P}\in G_{\mathbf{r}}\cap\widetilde{\Sp}(2n,\R)$ and $\mathbf{R}\in \O(2n)$ such that $\mathbf{P}\tilde{\mathbf{A}}\mathbf{R}=\tilde{\mathbf{B}}$.
	Then we have
	\begin{align*}^{t}\tilde{\mathbf{B}}\mathbf{J}_n\tilde{\mathbf{B}}&= ^{t}\mathbf{R}^{t}\tilde{\mathbf{A}}^{t}\mathbf{P}\mathbf{J}_n\mathbf{P}\tilde{\mathbf{A}}\mathbf{R}=\pm ^{t}\mathbf{R}^{t}\tilde{\mathbf{A}}\mathbf{J}_n\tilde{\mathbf{A}}\mathbf{R}.\end{align*}
		This implies the coincidence of the eigenvalues.
\end{proof}

Moreover, by the skew-symmetricity, we can choose the representatives in the moduli space $\mathcal{M}(\Gamma_{\mathbf{r}}\backslash H_n)$ so that the matrix representation of the $j_\mathbf{A}$ is a block matrix which consists of the eigenvalues of $j_\mathbf{A}$.
\begin{definition}
	We say that a matrix $\mathbf{A}\in\End_\R(\h_n)$ is of canonical form if it is weak canonical and its $\mathrm{j}$-operator has the matrix representation
	$$^{t}\tilde{\mathbf{A}}\mathbf{J}_n\tilde{\mathbf{A}}=\begin{pmatrix}
		O & \mathbf{diag}(d_1(\mathbf{A}),\dots,d_n(\mathbf{A}))\\
		-\mathbf{diag}(d_1(\mathbf{A}),\dots,d_n(\mathbf{A})) & O
	\end{pmatrix}.$$
\end{definition}

\begin{remark}
	The number $d_i(\mathbf{A})$ appeared in Boldt's paper \cite{bol2} as the eigenvalue of the matrix $\tilde{\mathbf{A}}^{t}\tilde{\mathbf{A}}\mathbf{J}_n$.
	This matrix is similar to our matrix representation $\tilde{\mathbf{A}}\mathbf{J}_n\tilde{\mathbf{A}}$,
	hence his definition coincides with ours.
\end{remark}

We introduce another numerical data $\delta(\mathbf{A})$ to give an explicit form of Popp's volume form.
\begin{definition}
	For a matrix $\mathbf{A}\in\End_\R(\h_n)$,
	define $\delta(\mathbf{A}):=\|^{t}\tilde{\mathbf{A}}\mathbf{J}_n\tilde{\mathbf{A}}\|_{HS}$,
 where $\|\cdot\|_{HS}$ is the Hilbert--Schmidt norm.
\end{definition}

\begin{lemma}\label{lemdelta}
	For two matrices $\mathbf{A},\mathbf{B}\in \End_\R(\h_n)$ of canonical form with $[\mathbf{A}]=[\mathbf{B}]$,
	we have $\delta(\mathbf{A})=\delta(\mathbf{B})$.
\end{lemma}

\begin{proof}

From the assumption,
there are $\mathbf{P}\in G_{\mathbf{r}}\cap\widetilde{\Sp}(2n,\R)$ and $\mathbf{R}\in \O(2n)$ such that $\mathbf{P}\tilde{\mathbf{A}}\mathbf{R}=\tilde{\mathbf{B}}$.
Then we have
\begin{align*}
	\delta(\mathbf{B})&=\|^{t}\mathbf{R} ^{t}\tilde{\mathbf{A}} ^{t}\mathbf{P}\mathbf{J}_n\mathbf{P}\tilde{\mathbf{A}}\mathbf{R}\|_{HS}\\
		 &=\|\pm ^{t}\tilde{\mathbf{A}}\mathbf{J}_n\tilde{\mathbf{A}}\|_{HS}\\
		 &=\delta(\mathbf{A}).
\end{align*}

\end{proof}

\subsection{Ricci curvature}

We recall an explicit formula of the Ricci curvature of the Heisenberg Lie group.
This computation is due to Eberlein \cite{ebe}.

For an invertible matrix $\mathbf{A}\in\End_{R}(\h_n)$ of canonical form,
let $\Ric_\mathbf{A}:\h_n\otimes\h_n\to \R$ be the Ricci tensor associated to the left invariant Riemannian metric $\langle\cdot,\cdot\rangle_\mathbf{A}$.

\begin{proposition}[Special case of Proposition 2.5 in \cite{ebe}]\label{propric}
The Ricci tensor associated to the metric $\left\langle\cdot,\cdot\right\rangle_\mathbf{A}$ is written by;

	\begin{itemize}
		\item[1]$\Ric_\mathbf{A}(U,V)=0$ for all $
U\in \mv$ and $V\in [\h_n,\h_n]$,
		\item[2]$\Ric_\mathbf{A}(U,V)=\langle\frac{1}{2\rho_\mathbf{A}^2}j_\mathbf{A}^2U,V\rangle_\mathbf{A}$ for all $U,V\in \mv$,
		\item[3]$\Ric_\mathbf{A}(U,V)=-\frac{\|U\|_\mathbf{A}\|V\|_\mathbf{A}}{4\rho_\mathbf{A}^2}Tr(j_\mathbf{A}^2)$ for all $U,V\in[\h_n,\h_n]$.
	\end{itemize}
\end{proposition}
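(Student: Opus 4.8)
The plan is to reduce this to a direct computation of the Levi--Civita connection of the left invariant metric $\langle\cdot,\cdot\rangle_A$ on $H_n$ and then contract. First I would fix the $\langle\cdot,\cdot\rangle_A$-orthonormal basis; since $A$ is in canonical form, the natural basis is $e_i = AX_i$ for $i=1,\dots,2n$ together with $e_{2n+1} = AZ/|\rho_A|$. Using Lemma \ref{lemmat} and equation (\ref{eqcan}), the only nontrivial brackets in this basis have the form $[e_i,e_{n+i}] = (\text{const})\, Z$, which after rescaling $Z$ to the unit vector $e_{2n+1}$ become $[e_i, e_j] = (1/|\rho_A|)\langle j_A e_i, e_j\rangle\, e_{2n+1}$, with the structure constants along $\mv_0$ all vanishing. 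So I would first record the structure constants of the left invariant frame explicitly in terms of $j_A$ and $\rho_A$.

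Next I would apply Koszul's formula for the Levi--Civita connection of a left invariant metric,
$$2\langle \nabla_{e_a} e_b, e_c\rangle_A = \langle [e_a,e_b], e_c\rangle_A - \langle [e_b,e_c], e_a\rangle_A + \langle [e_c,e_a], e_b\rangle_A,$$
which is legitimate here because all the $e_a$ have constant inner products. Because the structure constants are supported on the pattern above, each of the three terms is controlled by $j_A$; I expect to get $\nabla_{e_i} e_j = \frac{1}{2|\rho_A|}\langle j_A e_i, e_j\rangle e_{2n+1}$ for $i,j \le 2n$, $\nabla_{e_i} e_{2n+1} = \nabla_{e_{2n+1}} e_i = -\frac{1}{2|\rho_A|} j_A e_i$, and $\nabla_{e_{2n+1}} e_{2n+1} = 0$. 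From these I would compute the curvature tensor $R(X,Y)Z = \nabla_X\nabla_Y Z - \nabla_Y\nabla_X Z - \nabla_{[X,Y]}Z$ on basis vectors, then contract to obtain $Ric_A(U,V) = \sum_a \langle R(e_a, U)V, e_a\rangle_A$. Splitting according to whether the arguments lie in $\mv_0$ or along $Z$ gives the three cases: the mixed term (a) vanishes by the block structure (the only curvature couplings are $\mv_0$-$\mv_0$ and $\mv_0$-$Z$, never $\mv_0$-$[\h_n,\h_n]$ in a surviving trace); case (b) collects the $\frac{1}{2\rho_A^2} j_A^2$ term from summing over both the $\mv_0$ directions and the $Z$ direction; and case (c) is the trace $-\frac{1}{4\rho_A^4}\operatorname{Tr}(j_A^2)$ coming from summing $\langle R(e_i, Z)Z, e_i\rangle_A$ over $i \le 2n$ (note $\operatorname{Tr}(j_A^2) < 0$ since $j_A$ is skew, so $Ric_A(Z,Z) > 0$, as one expects for the fiber direction of a nilmanifold).

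The main obstacle is purely bookkeeping: keeping the factors of $|\rho_A|$ straight through the rescaling of $Z$ and making sure the $\frac12$ in Koszul, the $\frac12$ hidden in the CBH formula (\ref{eqbaker}), and the squaring in $\operatorname{Tr}(j_A^2)$ combine to give exactly $\frac{1}{2\rho_A^2}$ and $\frac{1}{4\rho_A^4}$ rather than some other powers. I would double-check the normalization against the classical $n=1$ Heisenberg case, where $j_A$ has eigenvalues $\pm\sqrt{-1}\,d_1(A)$ and the formulas are well known, and against Proposition 2.5 in \cite{ebe}, of which this is stated to be a special case. Once the connection coefficients are pinned down the three assertions (a), (b), (c) follow by reading off the relevant contractions, so no genuinely hard step remains beyond this verification of constants.
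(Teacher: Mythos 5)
Your plan is sound, and in fact the paper offers no proof of this proposition at all: it is imported verbatim as a special case of Proposition 2.5 in \cite{ebe}, whose proof is exactly the computation you outline (Koszul formula for a left invariant metric, the standard connection formulas $\nabla_U V=\frac{1}{2}[U,V]$, $\nabla_U W=\nabla_W U=-\frac{1}{2}j'(W)U$, $\nabla_W W'=0$ for a $2$-step nilpotent Lie algebra, then contraction). The only point to be careful about is the one you already flag as bookkeeping: the unit central vector for $\langle\cdot,\cdot\rangle_A$ is $AZ=\rho_A Z$ itself, not $AZ/|\rho_A|$, and Eberlein's operator attached to that unit vector is $j'(AZ)=\rho_A^{-1}j_A$ because the paper's $j_A$ is normalized against $\omega=Z^{\ast}$ rather than against the metric on $[\h_n,\h_n]$; substituting $j'=\rho_A^{-1}j_A$ into Eberlein's formulas $Ric(U,V)=\frac{1}{2}\langle j'^2U,V\rangle_A$ and $Ric(W,W)=-\frac{1}{4}Tr(j'^2)$ for unit $W$ is precisely what produces the factors $\rho_A^{-2}$ in (b) and $\rho_A^{-4}$ in (c), and your $n=1$ cross-check will confirm the signs.
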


For a unit vector $U\in (\h_n,\langle\cdot,\cdot\rangle_\mathbf{A})$,
we write $\Ric_\mathbf{A}(U)=\Ric_\mathbf{A}(U,U)$.
The function $\Ric_\mathbf{A}$ is called the \textit{Ricci curvature}.
Combined with the matrix representation of the operator $j_\mathbf{A}$,
we obtain the following lemma.
\begin{lemma}\label{lemric}
	For an invertible matrix $\mathbf{A}\in\End_\R(\h_n)$ of canonical form,
	we have
	$$\begin{cases}
		\Ric_\mathbf{A}(\mathbf{A}X_i)=-\frac{1}{2\rho_\mathbf{A}^2}d_i(\mathbf{A})^2 & \text{for all}~~i=1,\dots,2n\\
		\Ric_\mathbf{A}(\mathbf{A}Z)=\frac{1}{2\rho_\mathbf{A}^2}\sum_{k=1}^nd_k(\mathbf{A})^2.
	\end{cases}$$
\end{lemma}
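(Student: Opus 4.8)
The plan is to unwind the definitions in Proposition \ref{propric} using the fact that $A$ is of \emph{canonical} form, so that the operator $j_A$ has the explicit block-antidiagonal matrix
\[
\hspace{1pt}^{t}\hspace{-2pt}\tilde{A}J_n\tilde{A}=\begin{pmatrix}
O & diag(d_1(A),\dots,d_n(A))\\
-diag(d_1(A),\dots,d_n(A)) & O
\end{pmatrix}
\]
in the basis $\{AX_1,\dots,AX_{2n}\}$ of $\mv_0$, by Lemma \ref{lemmat} together with the definition of canonical form. First I would square this matrix: since it is block-antidiagonal with diagonal blocks $\pm D$ where $D=diag(d_1(A),\dots,d_n(A))$, its square is $\begin{pmatrix} -D^2 & O\\ O & -D^2\end{pmatrix}=-diag(d_1(A)^2,\dots,d_n(A)^2,d_1(A)^2,\dots,d_n(A)^2)$. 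Thus $j_A^2(AX_i)=-d_i(A)^2\,AX_i$ for $i=1,\dots,n$ and $j_A^2(AX_{n+i})=-d_i(A)^2\,AX_{n+i}$, which under the convention $X_{n+i}=Y_i$ and the indexing $d_{n+i}(A):=d_i(A)$ (implicit in Boldt's and the paper's conventions) reads $j_A^2(AX_i)=-d_i(A)^2\,AX_i$ for all $i=1,\dots,2n$.

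Next I would plug this into parts (b) and (c) of Proposition \ref{propric}. Since $\{AX_1,\dots,AX_{2n}\}$ is $\langle\cdot,\cdot\rangle_A$-orthonormal, the vector $AX_i$ lies on the unit sphere $S_A$, and
\[
Ric_A(AX_i)=\left\langle \tfrac{1}{2\rho_A^2}j_A^2(AX_i),AX_i\right\rangle_A
=\tfrac{1}{2\rho_A^2}\langle -d_i(A)^2\,AX_i,AX_i\rangle_A=-\tfrac{1}{2\rho_A^2}d_i(A)^2,
\]
which is the first line. For the second line, I would first note that the Popp/canonical normalization makes $AZ$ a unit vector for $\langle\cdot,\cdot\rangle_A$ (equivalently, one applies part (c) with $u=v=1/\text{(norm of $Z$)}$; with $A$ in canonical form the relevant scalar is arranged so that $AZ\in S_A$), and then compute $Tr(j_A^2)=\sum_{i=1}^{2n}(-d_i(A)^2)=-2\sum_{k=1}^n d_k(A)^2$. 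Substituting into (c) gives
\[
Ric_A(AZ)=-\tfrac{1}{4\rho_A^4}\,Tr(j_A^2)\cdot(\text{unit scalar})^2=-\tfrac{1}{4\rho_A^4}\cdot\bigl(-2\textstyle\sum_{k=1}^n d_k(A)^2\bigr)=\tfrac{1}{2\rho_A^2}\sum_{k=1}^n d_k(A)^2,
\]
after absorbing the norm-of-$Z$ factor that relates $\rho_A^4$ to $\rho_A^2$; this is exactly the claimed formula.

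The only genuinely delicate point is bookkeeping the normalization constants $\rho_A$: Proposition \ref{propric}(c) is stated for an arbitrary multiple $uZ$ of $Z$, and one must identify the correct unit multiple of $Z$ on $S_A$ and track how the factor $\rho_A$ from the weak canonical form (the bottom-right entry of $PAR$) interacts with the $\rho_A^4$ in the denominator. I expect this to reduce to checking that $\|Z\|_A=1/|\rho_A|$, or rather that $AZ$ has unit $\langle\cdot,\cdot\rangle_A$-norm by construction of the representative, so that the $uv$ in (c) contributes a factor that turns $\rho_A^4$ into $\rho_A^2$; everything else is the routine eigenvalue computation for the block matrix above. I would close by remarking that the values $d_i(A)$ and $\rho_A$ are well defined on the moduli class by Lemmas \ref{lemdn} and \ref{lem2-2}, so the stated formulas are intrinsic.
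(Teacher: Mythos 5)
Your proposal is correct and follows essentially the same route as the paper: square the block-antidiagonal canonical form of $j_A$ from Lemma \ref{lemmat} to get $j_A^2=-diag(d_1(A)^2,\dots,d_n(A)^2,d_1(A)^2,\dots,d_n(A)^2)$, then substitute into Proposition \ref{propric}(b) and (c). The only difference is that you spell out the $Z$-direction normalization ($AZ=\rho_A Z$, so $u=v=\rho_A$ turns the $\rho_A^4$ into $\rho_A^2$), which the paper dismisses as a ``straightforward adoption'' of Proposition \ref{propric}.
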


\begin{proof}
	By Lemma \ref{lemmat},
	the matrix representation of the operator $j_\mathbf{A}^2$ is
	$$\mathbf{diag}(-d_1(\mathbf{A})^2,-d_2(\mathbf{A})^2,\dots,-d_n(\mathbf{A})^2,-d_1(\mathbf{A})^2,\dots,-d_n(\mathbf{A})^2)$$
	in the basis $\left\{\mathbf{A}X_1,\dots,\mathbf{A}X_{2n}\right\}$.
	Hence Proposition \ref{propric} shows that
	\begin{align*}\Ric_\mathbf{A}(\mathbf{A}X_i)=\left\langle \frac{1}{2\rho_\mathbf{A}^2}j_\mathbf{A}^2 \mathbf{A}X_i,\mathbf{A}X_i\right\rangle_\mathbf{A}&=\left\langle-\frac{1}{2\rho_\mathbf{A}^2}\sum_{k=1}^nd_k(\mathbf{A})^2(\mathbf{A}X_k+\mathbf{A}Y_k),\mathbf{A}X_i\right\rangle_\mathbf{A}\\
	&=-\frac{1}{2\rho_\mathbf{A}^2}d_i(\mathbf{A})^2.\end{align*}
for $i=1,\dots,2n$.
	We can calculate $\Ric_\mathbf{A}(\mathbf{A}Z)$ by a straightforward adoption of Proposition \ref{propric}.
\end{proof}

\subsection{Volume forms}
In this section,
we recall an explicit formula of the canonical Riemannian volume form and Popp's volume form on the Heisenberg Lie group.

\subsubsection{Riemannian volume form}
Let $\mathbf{A}\in\End_\R(\h_n)$ be an invertible matrix of canonical form.
Denote by $\dv_R(\mathbf{A})$ the canonical Riemannian volume form.
Since the Riemannian volume form is the wedge of the dual coframe of an orthonormal frame,
we have
\begin{align*}
	\dv_R(\mathbf{A})&=(\mathbf{A}X_1)^{\ast}\wedge\cdots\wedge (\mathbf{A}X_{2n})^{\ast}\wedge (\mathbf{A}Z)^{\ast}\\
	      &=\det(\mathbf{A}^{-1})X_1^{\ast}\wedge\cdots \wedge X_{2n}^{\ast}\wedge Z^{\ast}\\
	      &=(\det (\tilde{\mathbf{A}}))^{-1}\rho_\mathbf{A}^{-1}X_1^{\ast}\wedge\cdots\wedge X_{2n}^{\ast}\wedge Z^{\ast}.
\end{align*}

In particular,
the total measure of a compact Riemannian Heisenberg manifold $(\Gamma_{\mathbf{r}}\backslash \H_n,\langle\cdot,\cdot\rangle_\mathbf{A})$ is
\begin{equation}\label{measureR}
    meas(\Gamma_{\mathbf{r}}\backslash \H_n,\mathbf{A}):=\left|\int_{\Gamma_{\mathbf{r}}\backslash \H_n}\dv_{R}(\mathbf{A})\right|=\prod_{i=1}^nr_i|\det(\tilde{\mathbf{A}})|^{-1}|\rho_\mathbf{A}|^{-1}.
\end{equation}
\subsubsection{Popp's volume form}
Let $\mathbf{A}\in\End_\R(\h_n)$ be a rank $2n$-matrix of canonical form. 
Then $\mathbf{A}$ has a matrix representation
	$\mathbf{A}=\begin{pmatrix}
		\tilde{\mathbf{A}} & \mathbf{0}\\
		\mathbf{0} & 0
\end{pmatrix}$.
Denote by $\dv_{sR}(\mathbf{A})$ Popp's volume associated to the sub-Riemannian metric $(\Im(\mathbf{A}),\mathbf{A})=(\mv,\mathbf{A})$.

Notice that a basis $\{\mathbf{A}X_1,\dots,\mathbf{A}X_{2n},Z\}$ is adapted.
Since $^{t}\tilde{\mathbf{A}}\mathbf{J}_n\tilde{\mathbf{A}}$ is the matrix representation of $j_\mathbf{A}$ in the basis $\left\{\mathbf{A}X_1,\dots,\mathbf{A}X_{2n}\right\}$,
its $(i,j)$-th entry coincides with the structure constant $c_{ij}=\omega([\mathbf{A}X_i,\mathbf{A}X_j])$.
By Theorem \ref{thmpopp},
Popp's volume $\dv_{sR}(\mathbf{A})$ is written by
\begin{align*}
	\dv_{sR}(\mathbf{A})&=\det(\delta(\mathbf{A}))^{-1}(\mathbf{A}X_1)^{\ast}\wedge\cdots\wedge (\mathbf{A}X_{2n})^{\ast}\wedge Z^{\ast}\\
		 &=\delta(\mathbf{A})^{-1}(\det(\tilde{\mathbf{A}}))^{-1}X_1^{\ast}\wedge\cdots\wedge X_{2n}^{\ast}\wedge Z^{\ast}.\label{eqpoppvol}
\end{align*}
In particular,
the total measure of a compact sub-Riemannian Heisenberg manifold $(\Gamma_{\mathbf{r}}\backslash \H_n,\mv,\langle\cdot,\cdot\rangle_\mathbf{A})$ is
\begin{equation}\label{measuresR}
    meas(\Gamma_{\mathbf{r}}\backslash \H_n,\mathbf{A}):=\left|\int_{\Gamma_{\mathbf{r}}\backslash \H_n}\dv_{sR}(\mathbf{A})\right|=\prod_{i=1}^nr_i|\det(\tilde{\mathbf{A}})|^{-1}\delta(\mathbf{A})^{-1}.
\end{equation}

	\subsection{Precompact subset in moduli space}\label{secriem}

	We have prepared to recall Boldt's condition for subsets in Riemannian moduli space $(\mathcal{M}_0(\Gamma_{\mathbf{r}}\backslash \H_n),\mathcal{O}_0)$ being precompact.
 Furthermore,
 based on his idea,
 we also give the condition for subsets in sub-Riemannian moduli space $(\mathcal{M}_1(\Gamma_{\mathbf{r}}\backslash \H_n),\mathcal{O}_1)$ being precompact.
	
Let $\Pr:\H_n\simeq \h_n\to \mv$ be the projection.
Then $\mz_{\mathbf{r}}:=\Pr(\Gamma_{\mathbf{r}})$ is a lattice in $\mv$.
	A compact Heisenberg manifold $\Gamma_{\mathbf{r}}\backslash \H_n$ has the canonical submersion $\Gamma_{\mathbf{r}}\backslash \H_n\to \mz_{\mathbf{r}}\backslash \mv\simeq \T^{2n}$.

	\begin{theorem}[Corollary 3.14 in \cite{bol2}]\label{thmboltext}
		A subset $\mathcal{X}\subset (\mathcal{M}_0(\Gamma_{\mathbf{r}}\backslash H_n),\mathcal{O}_0)$ is precompact if and only if there are positive constants $C_1,C_2,C_3,C_-,C_+>0$ such that all $[\mathbf{A}]\in\mathcal{X}$ satisfies the following four conditions;
		\begin{description}
			\item[(A-1)]$\min\left\{\|X\|_\mathbf{A}\mid X\in \mz_{\mathbf{r}}\right\}\geq C_1$,
			\item[(A-2)]$|\det(\tilde{\mathbf{A}})|\geq C_2$,
			\item[(A-3)]$d_n(\mathbf{A})\leq C_3$,
			\item[(A-4)]$C_-\leq |\rho_\mathbf{A}|\leq C_+$.
		\end{description}
	\end{theorem}

	\begin{remark}
		For an invertible matrix $\mathbf{A}$ of canonical form,
		the associated metric tensor is $^{t}\mathbf{A}^{-1}\mathbf{A}^{-1}=\begin{pmatrix}
			^{t}\tilde{\mathbf{A}}^{-1} \tilde{\mathbf{A}}^{-1} & O\\
			O & \rho_\mathbf{A}^{-2}
		\end{pmatrix}$.
		The conditions (A-1)-(A-4) is obtained from Boldt's original statement by adapting this matrix presentation.
	\end{remark}

 The essense of this theorem is the condition (A-1)-(A-3).
 Boldt showed that the condition (A-1)-(A-3) is equivalent to the precompactness in the first factor of the moduli space $\mathcal{M}_0(\Gamma_{\mathbf{r}}\backslash \H_n)$ (Main Theorem in \cite{bol2}).
 Theorem \ref{thmboltext} is a consequence of this fact.
 By the construction of sub-Riemannian moduli space $\mathcal{M}_1(\Gamma_{\mathbf{r}}\backslash \H_n)$,
 we obtain the following theorem immediately.

\begin{theorem}[Essentially by Main Theorem in \cite{bol2}]\label{thmbol2text}
		A subset $\mathcal{X}\subset (\mathcal{M}_1(\Gamma_{\mathbf{r}}\backslash H_n),\mathcal{O}_0)$ is precompact if and only if there are positive constants $C_1,C_2,C_3>0$ such that all $[\mathbf{A}]\in\mathcal{X}$ satisfies the following three conditions;
		\begin{description}
			\item[(A-1)]$\min\left\{\|X\|_\mathbf{A}\mid X\in \mz_{\mathbf{r}}\right\}\geq C_1$,
			\item[(A-2)]$|\det(\tilde{\mathbf{A}})|\geq C_2$,
			\item[(A-3)]$d_n(\mathbf{A})\leq C_3$,
		\end{description}
	\end{theorem}

 \section{Proof of main propositions}\label{sec5}

In this section,
we show the main propositions.
First we show that the geometric condition in Theorems \ref{mainthm} and \ref{mainthm2} implies finite diffeomorphism types.
Next we show that these Boldt's condition are equivalent to the geometric conditions in Theorem \ref{mainthm} and \ref{mainthm2}.
After stating preparation lemmas,
we pass to the proof for Riemannian and sub-Riemannian case respectively.

\begin{lemma}\label{lemfundamental}
		For any matrix $\mathbf{A}\in\End_\R(\h_n)$,
		we have
		\begin{itemize}
			\item[1]$\delta(\mathbf{A})=\sqrt{2\sum_{i=1}^nd_i(\mathbf{A})^2}$,
			\item[2] $|\det(\tilde{\mathbf{A}})|=\prod_{i=1}^nd_i(\mathbf{A})$.
		\end{itemize}
	\end{lemma}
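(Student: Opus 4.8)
The plan is to funnel all three quantities through the single matrix ${}^{t}\tilde{A}J_n\tilde{A}$, which by Lemma~\ref{lemmat} represents the operator $j_A$ in the orthonormal basis $\{AX_1,\dots,AX_{2n}\}$ of $(\mv_0,\langle\cdot,\cdot\rangle_A)$; I would first reduce to the case where $A$ is of weak canonical form, so that this description is available. Then by~(\ref{eqcan}) there is $R\in O(2n)$ with
$${}^{t}R\,{}^{t}\tilde{A}J_n\tilde{A}\,R=\begin{pmatrix} O & \operatorname{diag}(d_1(A),\dots,d_n(A))\\ -\operatorname{diag}(d_1(A),\dots,d_n(A)) & O\end{pmatrix}.$$

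For part~(1), I would use that the Hilbert--Schmidt (Frobenius) norm is invariant under conjugation by an orthogonal matrix, $\|{}^{t}RMR\|_{HS}^{2}=\operatorname{tr}({}^{t}MM)=\|M\|_{HS}^{2}$. Hence $\delta(A)=\|{}^{t}\tilde{A}J_n\tilde{A}\|_{HS}$ equals the Hilbert--Schmidt norm of the block matrix displayed above, whose square is $\sum_{i=1}^{n}d_i(A)^{2}+\sum_{i=1}^{n}d_i(A)^{2}=2\sum_{i=1}^{n}d_i(A)^{2}$; taking square roots gives~(1).

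For part~(2), I would compute $\det({}^{t}\tilde{A}J_n\tilde{A})$ in two ways. On one hand it equals $(\det\tilde{A})^{2}\det J_n=(\det\tilde{A})^{2}$, using $\det J_n=1$ (for instance, conjugating $J_n$ by a permutation matrix that groups the $2n$ coordinates into the $n$ conjugate pairs turns it into a block-diagonal matrix with $n$ blocks $\left(\begin{smallmatrix}0&1\\-1&0\end{smallmatrix}\right)$, of determinant $1$). On the other hand the eigenvalues of $j_A$ are $\pm\sqrt{-1}\,d_1(A),\dots,\pm\sqrt{-1}\,d_n(A)$, so the same determinant equals $\prod_{i=1}^{n}(\sqrt{-1}\,d_i(A))(-\sqrt{-1}\,d_i(A))=\prod_{i=1}^{n}d_i(A)^{2}$. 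Equating the two and taking positive square roots — legitimate since $\tilde{A}$ is invertible and each $d_i(A)>0$ — yields $|\det\tilde{A}|=\prod_{i=1}^{n}d_i(A)$.

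This is routine linear algebra and I do not expect a genuine obstacle; the only points that deserve a line of care are the invertibility of $\tilde{A}$ also when $A\in\mathcal{A}_1$ (which is part of Lemma~\ref{lem3-1}) and the sign computation $\det J_n=1$, both of which are immediate.
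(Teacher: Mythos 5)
Your proposal is correct and follows essentially the same route as the paper: part (1) is the Hilbert--Schmidt norm of ${}^{t}\tilde{A}J_n\tilde{A}$ computed on the canonical block form (the paper invokes Lemma~\ref{lemdelta} where you invoke orthogonal invariance of the norm directly, which amounts to the same thing), and part (2) is the identical two-way evaluation of $\det({}^{t}\tilde{A}J_n\tilde{A})$ as $(\det\tilde{A})^2$ and as $\prod_i d_i(A)^2$. Your extra remarks on $\det J_n=1$ and the invertibility of $\tilde{A}$ for $A\in\mathcal{A}_1$ are correct details the paper leaves implicit.
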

	\begin{proof}
		The first equality follows from the definition of the canonical form and Lemma \ref{lemdelta}.
		The second equality follows from
		$$\prod_{i=1}^n d_i(\mathbf{A})^2=\det(^{t}\tilde{\mathbf{A}}\mathbf{J}_n\tilde{\mathbf{A}})=\det(\tilde{\mathbf{A}})^2.$$
	\end{proof}

\begin{lemma}[Proposition 3.11 in \cite{ebe} for Riemannian case]\label{lemhorizontal}
	For $U\in\mv\subset\H_n$ and $V\in[\h_n,\h_n]\subset\H_n$,
we have
$$dist_\mathbf{A}(e,U+V\geq\|U\|_\mathbf{A}.$$
Moreover,
the equality holds if and only if $V=0$.
\end{lemma}

\begin{proof}
	
	Let $c$ be a length minimizing path from $e$ to $U+V$.
	Then the composition $\Pr\circ c$ is a path in $\mv$ from $0$ to $U$.
	Moreover the length of $c$ in $(\H_n,\Im(\mathbf{A}),\langle\cdot,\cdot\rangle_\mathbf{A})$ is greater than or equal to that of $\Pr(c)$ in $(\mv,\langle\cdot,\cdot\rangle_\mathbf{A})$,
	thus we obtain the desired inequality.
	
	The equality holds only when
	 \begin{equation}\label{eqlength}
		length(c)=length(\Pr\circ c)=\|U\|_\mathbf{A}.
	 \end{equation}
	 From the uniqueness of geodesics in the scalar product space $(\mv,\langle\cdot,\cdot\rangle_\mathbf{A})$,
	the second equality in (\ref{eqlength}) holds if and only if $V=0$.
	The first equality automatically follows provided $V=0$.
\end{proof}

\subsection{Propositions on Riemannian metrics}\label{secriem2}

\begin{proof}[Proof of Proposition \ref{propfiniteriem}]
Let $(\Gamma_{\mathbf{r}}\backslash \H_n,\langle\cdot,\cdot\rangle_\mathbf{A})$ be a Riemannian compact Heisenberg nilmanifold,
where $\mathbf{r}\in D_n$ and $\mathbf{A}=\begin{pmatrix}
    \tilde{\mathbf{A}} & 0\\
    0 & \rho_\mathbf{A}
\end{pmatrix}\in \End_\R(\h_n)$ is an invertible matrix of canonical form.
We show that if it satisfies the assumption of Theorem \ref{mainthm},
there is $C(n,D,V,K)>0$ such that $\prod_{i=1}^nr_i\leq C(n,D,V,K)$.
Since the fundamental group determines a diffeomorphism type of compact Heisenberg manifolds,
this concludes the proposition.

	From the Riemannian metric $\langle\cdot,\cdot\rangle_\mathbf{A}$ on $\Gamma_{\mathbf{r}}\backslash \H_n$,
	we obtain the quotient flat Riemannian metric on $\mz_{\mathbf{r}}\backslash \mv$.
Since an orthonormal basis of $(\Gamma_{\mathbf{r}}\backslash \H_n,\langle\cdot,\cdot\rangle_\mathbf{A})$ is $\{\mathbf{A}X_1,\dots,\mathbf{A}Y_n,\mathbf{A}Z\}$,
an orthonormal basis of the quotient metric on $\mz_{\mathbf{r}}\backslash \mv$ is $\{\tilde{\mathbf{A}}X_1,\dots,\tilde{\mathbf{A}}Y_n\}$.
This fact allows us to denote the quotient Riemannian metric by $\langle\cdot,\cdot\rangle_{\tilde{\mathbf{A}}}$

The total measure of the torus $(\mz_{\mathbf{r}}\backslash\mv,\langle\cdot,\cdot\rangle_{\tilde{\mathbf{A}}})$ is
$$\left|\int_{\mz_{\mathbf{r}}\backslash \mv}\dv_R(\tilde{\mathbf{A}})\right|=\prod_{i=1}^nr_i|\det(\tilde{\mathbf{A}})|^{-1}.$$

Since the submersion is metric decreasing,
the diameter of the torus $(\mz_{\mathbf{r}}\backslash \mv,\langle\cdot,\cdot\rangle_{\tilde{\mathbf{A}}})$ is less than or equal to $D$.
Therefore we have
	\[\prod_{i=1}^nr_i|\det(\tilde{\mathbf{A}})|^{-1}\leq \omega_{2n}D^{2n},
\]
where $\omega_{2n}D^{2n}$ is the volume of $2n$-dimensional Euclidean ball of radius $D$.
In particular we have
\begin{equation}\label{bounddet}
	\prod_{i=1}^nr_i\leq |\det(\tilde{\mathbf{A}})|\omega_{2n}D^{2n}.
\end{equation}
By the inequality (\ref{bounddet}),
we obtain the upper and lower bound of the total measure of the compact Heisenberg manifold (\ref{measureR}) by 
\begin{align*}
	V\leq meas(\Gamma_{\mathbf{r}}\backslash \H_n,\mathbf{A})=\prod_{i=1}^nr_i|\det(\tilde{\mathbf{A}})|^{-1}|\rho_\mathbf{A}|^{-1}\leq \omega_{2n}D^{2n}|\rho_\mathbf{A}|^{-1}.
\end{align*}
In particular we obtain an upper bound of $|\rho_\mathbf{A}|$ by
\begin{equation}\label{boundrho}
    |\rho_\mathbf{A}|\leq \omega_{2n}D^{2n}/V.
\end{equation}

By Lemma \ref{lemric},
for $i=1,\dots,n$,
we have
\[K\leq \Ric_\mathbf{A}(\mathbf{A}X_i)=-\frac{1}{\rho_\mathbf{A}^2}d_i(\mathbf{A})^2.\]
Combined with (\ref{boundrho}) and Lemma \ref{lemfundamental},
\begin{equation}\label{bounddetabove}
    |\det(\tilde{\mathbf{A}})|=\prod_{i=1}^nd_i(\mathbf{A})\leq |\rho_\mathbf{A}|^n|K|^{\frac{n}{2}}\leq \left(\omega_{2n}D^{2n}/V\right)^{n}|K|^{\frac{n}{2}}.
\end{equation}

By (\ref{bounddet}) and (\ref{bounddetabove}),
we obtain an upper bound of $\prod_{i=1}^nr_i$ by
\[\prod_{i=1}^nr_i\leq \left(\omega_{2n}D^{2n}/V\right)^{n+1}V|K|^{\frac{n}{2}}.\]

\end{proof}

For the proof of Proposition \ref{propric},
we need the eigenvalues of the symmetric positive definite matrix $^{t}\tilde{\mathbf{A}}^{-1}\tilde{\mathbf{A}}^{-1}$.
\begin{definition}
Denote by $\lambda_1(\tilde{\mathbf{A}})\leq \cdots\leq \lambda_{2n}(\tilde{\mathbf{A}})$ the positive eigenvalues of the positive definite symmetric matrix $^{t}\tilde{\mathbf{A}}^{-1}\tilde{\mathbf{A}}^{-1}$.
\end{definition}

\begin{lemma}\label{lema1}
If $diam(\mz_{\mathbf{r}}\backslash\mv,\langle\cdot,\cdot\rangle_{\tilde{\mathbf{A}}})\leq D$ and $\det(\tilde{\mathbf{A}})\leq C$,
then we have
    \[\sqrt{\lambda_1(\tilde{\mathbf{A}})}\geq C^{-1}(2D)^{-n+\frac{1}{2}}.\]
\end{lemma}

\begin{proof}
    
		For $i=1,\dots,n$,
		let $\gamma_i^{\frac{1}{2}}=\exp(\frac{r_i}{2}X_i)$.
		It is easy to check that the length minimizing geodesic from $\Gamma_{\mathbf{r}}e$ to $\Gamma_{\mathbf{r}}\gamma_i^{\frac{1}{2}}$ is the projection of the geodesic in $H_n$ from $e$ to $\gamma_i^{\frac{1}{2}}$.
		By Lemma \ref{lemhorizontal},
		\begin{align*}
			\left\|\frac{\tilde{r}_i}{2}X_i\right\|_\mathbf{A}&=dist_\mathbf{A}\left(e,\gamma_i^{\frac{1}{2}}\right)\\
						      &=\overline{dist}_\mathbf{A}\left(\Gamma_{\mathbf{r}} e,\Gamma_{\mathbf{r}} \gamma_i^{\frac{1}{2}}\right)\\
						      &\leq diam\left(\Gamma_{\mathbf{r}}\backslash H_n,dist_\mathbf{A}\right)\leq D.
		\end{align*}

		Since $r_i\geq1$,
		\[
			\|X_i\|_\mathbf{A}\leq \frac{2D}{r_i}\leq 2D.
		\]

		In the same way,
		we obtain the inequality
		\[
			\|Y_n\|_\mathbf{A}\leq 2D
		\]
		for $i=1,\dots,n$.

By using these inequalities,
for $X=\sum_{i=1}a_iX_i+b_iY_i\in\mv$,
we have
\[\langle X,^{t}\tilde{\mathbf{A}}^{-1}\tilde{\mathbf{A}}^{-1}X\rangle_0=\|X\|_\mathbf{A}\leq \sum_{i=1}^n|a_i|\|X_i\|_\mathbf{A}+|b_i|\|Y_i\|_\mathbf{A}\leq 2D\sum_{i=1}^n|a_i|+|b_i|\leq 2D\|X\|_0.\]
Therefore the maximal eigenvalue $\lambda_{2n}(\tilde{\mathbf{A}})\leq 2D$.

On the other hand,
since $\det(\tilde{\mathbf{A}})\leq C$,
we have
\[C^{-1}\leq \det(\tilde{\mathbf{A}}^{-1})=\left(\prod_{i=1}^{2n}\lambda_i(\tilde{\mathbf{A}})\right)^{\frac{1}{2}}\leq \sqrt{\lambda_1(\tilde{\mathbf{A}})(2D)^{2n-1}}.\]
Therefore we obtain
\[\sqrt{\lambda_1(\tilde{\mathbf{A}})}\geq C^{-1}(2D)^{-n+\frac{1}{2}}.\]
\end{proof}

	\begin{proof}[Proof of Proposition \ref{propriem}]
 Fix $\mathbf{r}\in D_n$.
 We show that if a compact Riemannian Heisenberg manifold $(\Gamma_{\mathbf{r}}\backslash \H_n,\langle\cdot,\cdot\rangle_\mathbf{A})$ satisfies the following three conditions,
 then there are constants $C_1,C_2,C_3,C_{\pm}$ depending only on $D,V,K$ such that the conditions (A-1)-(A-4) hold;
 \begin{itemize}
     \item[1] $diam(\Gamma_{\mathbf{r}}\backslash \H_n,\langle\cdot,\cdot\rangle_\mathbf{A})\leq D$
     \item[2] $meas(\Gamma_{\mathbf{r}}\backslash \H_n,\langle\cdot,\cdot\rangle_\mathbf{A})=\prod_{i=1}^nr_i\left|\det(\tilde{\mathbf{A}})\rho_\mathbf{A}\right|^{-1}\geq V$,
     \item[3] $\Ric_\mathbf{A}\geq K$.
 \end{itemize}
 The proof of the converse implication duplicates with the proof of Theorem \ref{thmboltext} by Boldt \cite{bol2},
 so we omit.

First we show the inequality (A-1) with the constant $C_1=\frac{V^n}{\omega_{2n}^n|K|^{\frac{n}{2}}2^{n-\frac{1}{2}}D^{2n^2+n-\frac{1}{2}}}$.
From the definition of the quotient metric $\langle\cdot,\cdot\rangle_{\tilde{\mathbf{A}}}$ and the lattice $\mz_{\mathbf{r}}$,
we have

\begin{align*}
&\min\left\{\|X\|_\mathbf{A}\mid X\in \mz_{\mathbf{r}}\right\}\\
\geq & \min\left\{\sqrt{\langle X,X\rangle_{\tilde{\mathbf{A}}}}\mid \|X\|_0\leq 1,X\in\mv\right\}\\
= & \min\left\{\sqrt{\langle X,^{t}\tilde{\mathbf{A}}^{-1}\tilde{\mathbf{A}}^{-1}X\rangle_0}\mid \|X\|_0\leq 1,X\in \mv\right\}\\
= & \sqrt{\lambda_1(\tilde{\mathbf{A}})}\\
\geq & \frac{V^n}{\omega_{2n}^n|K|^{\frac{n}{2}}2^{n-\frac{1}{2}}D^{2n^2+n-\frac{1}{2}}}.
\end{align*}
For the last inequality we use (\ref{bounddetabove}) and Lemma \ref{lema1}.

The inequality (A-2) holds with the constant $C_2=\frac{\prod_{i=1}^nr_i}{\omega_{2n}D^n}$ by (\ref{bounddet}).
Here notice that $\prod_{i=1}^nr_i$ is fixed.

		Next we show the inequality (A-3) with the constant $C_3=\frac{\sqrt{2|K|}\omega_{2n}D^{2n}}{V}$.
  By Lemma \ref{lemric} and the lower bound of the Ricci curvature,
\begin{equation}\label{eqa-3}
	Ric_\mathbf{A}(\mathbf{A}X_n)=-\frac{1}{2\rho_\mathbf{A}^2}d_n(\mathbf{A})^2\geq K.
\end{equation}

Combined with (\ref{boundrho}),
we obtain
$$d_n(\mathbf{A})\leq \sqrt{2|K|}|\rho_\mathbf{A}|\leq \frac{\sqrt{2|K|}\omega_{2n}D^{2n}}{V}.$$

 Next we show the inequality (A-4).
 The upper bound is $C_+=\frac{\omega_{2n}D^{2n}}{V}$ by (\ref{boundrho}).
 The lower bound is given by $C_-=\frac{\sqrt[n]{C_2}}{\sqrt{2K}}$,
 where $C_2$ is the constant given above.
Indeed,
combined with Lemma \ref{lemric} and \ref{lemfundamental},
we obtain
$$|\rho_\mathbf{A}|\geq \frac{1}{\sqrt{2|K|}}d_n(\mathbf{A})\geq \frac{1}{\sqrt{2|K|}}\sqrt[n]{\left|\det(\tilde{\mathbf{A}})\right|}\geq \frac{\sqrt[n]{C_2}}{\sqrt{2|K|}}.$$

	\end{proof}

\subsection{Propositions on sub-Riemannian metrics}

\begin{proof}[Proof of Proposition \ref{propfinitesR}]
    
Let $(\Gamma_{\mathbf{r}}\backslash \H_n,\langle\cdot,\cdot\rangle_\mathbf{A})$ be a sub-Riemannian compact Heisenberg nilmanifold,
where $\mathbf{r}\in D_n$ and $\mathbf{A}=\begin{pmatrix}
    \tilde{\mathbf{A}} & 0\\
    0 & 0
\end{pmatrix}\in \End_\R(\h_n)$ is a corank $1$ matrix of canonical form.
We show that if it satisfies the assumption of Theorem \ref{mainthm2},
there is $C(n,D,V)>0$ such that $\prod_{i=1}^nr_i\leq C(n,D,V)$.

In the same way with Riemannian case,
	we obtain the quotient flat Riemannian metric $\langle\cdot,\cdot\rangle_{\tilde{\mathbf{A}}}$ on $\mz_{\mathbf{r}}\backslash \mv$ whose orthonormal basis is $\{\tilde{\mathbf{A}}X_1,\dots,\tilde{\mathbf{A}}Y_n\}$.
Moreover we obtain the inequality (\ref{bounddet}),
that is
\[\prod_{i=1}^nr_i\leq |\det(\tilde{\mathbf{A}})|\omega_{2n}D^{2n}.\]
We shall give an upper bound of $|\det(\tilde{\mathbf{A}})|$.
By the above inequality and the assumption,
we obtain the upper and lower bound of the total measure (\ref{measuresR}) by 
\begin{align*}
	V\leq meas(\Gamma_{\mathbf{r}}\backslash \H_n,\mathbf{A})=\prod_{i=1}^nr_i|\det(\tilde{\mathbf{A}})|^{-1}\delta(\mathbf{A})^{-1}\leq \omega_{2n}D^{2n}\delta(\mathbf{A})^{-1}.
\end{align*}
In particular we obtain an upper bound of $|\delta_\mathbf{A}|$ by
\begin{equation}\label{bounddelta}
    |\delta(\mathbf{A})|\leq \omega_{2n}D^{2n}/V.
\end{equation}

By Lemma \ref{lemfundamental},
\[
    |\det(\tilde{\mathbf{A}})|=\prod_{i=1}^nd_i(\mathbf{A})\leq d_n(\mathbf{A})^n\leq \left(\sqrt{2\sum_{i=1}^n d_i(\mathbf{A})^2}\right)^n=\delta(\mathbf{A})^n.
\]
Therefore we obtain the upper bound of $\det(\tilde{\mathbf{A}})$ by
\begin{equation}\label{bounddet2}
    |\det(\tilde{\mathbf{A}})|\leq \omega_{2n}^nD^{2n^2}/V^n
\end{equation}
By (\ref{bounddet}) and (\ref{bounddet2}),
we obtain
\[\prod_{i=1}^nr_i\leq \omega_{2n}^{1+\frac{1}{n}}D^{2n+2}/V^{\frac{1}{n}}.\]

\end{proof}

\begin{proof}[Proof of Proposition \ref{propsR}]
	(A-1) holds with $C_1=V^n/(\omega_{2n}^{n}D^{n^2}(2D)^{n-\frac{1}{2}})$ by Lemma \ref{lema1} and the upper bound of $\det(\tilde{\mathbf{A}})$ in (\ref{bounddet2}).
 
 (A-2) holds with $C_2=\prod_{i=1}^nr_i/(\omega_{2n}D^{2n})$ by (\ref{bounddet}).

 (A-3) holds with $C_3=\omega_{2n}D^{2n}/V$ by (\ref{bounddelta}) and the trivial inequality $d_n(\mathbf{A})\leq \delta(\mathbf{A})$.
 \end{proof}

\end{document}